\numberwithin{equation}{section}
\numberwithin{figure}{section}
\theoremstyle{plain}
\newtheorem{thm}{\protect\theoremname}
  \theoremstyle{plain}
  \newtheorem{prop}[thm]{\protect\propositionname}
  \theoremstyle{definition}
  \newtheorem{defn}[thm]{\protect\definitionname}
  \theoremstyle{remark}
  \newtheorem{rem}[thm]{\protect\remarkname}
  \theoremstyle{plain}
  \newtheorem{lem}[thm]{\protect\lemmaname}
  \theoremstyle{plain}
  \newtheorem{cor}[thm]{\protect\corollaryname}
  \providecommand{\corollaryname}{Corollary}
  \providecommand{\definitionname}{Definition}
  \providecommand{\lemmaname}{Lemma}
  \providecommand{\propositionname}{Proposition}
  \providecommand{\remarkname}{Remark}
\providecommand{\theoremname}{Theorem}
\newcommand{\C}{\mathbb{C}}
\newcommand{\R}{\mathbb{R}}
\newcommand{\qq}{\mathbb{Q}}
\newcommand{\ZZ}{\mathbb{Z}}
\newcommand{\Spec}{\mathrm{Spec}}
\newcommand{\GL}{\mathrm{GL}}
\newcommand{\Hom}{\mathrm{Hom}}
\newcommand{\mc}{\mathcal}
\newcommand{\mf}{\mathfrak}
\newcommand{\mb}{\mathbb}
\newcommand{\inj}{\hookrightarrow}
\newcommand{\sub}{\subseteq}
\newcommand{\Zp}{\mathbb{Z}_{p}}
\newcommand{\Qp}{\mathbb{Q}_{p}}
\newcommand{\Fp}{\mathbb{F}_{p}}
\newcommand{\dg}{\dagger}
\newcommand{\ra}{\rightarrow}
\newcommand{\ol}{\overline}
\newcommand{\wh}{\widehat}
\begin{document}

\title{A trace formula approach to control theorems for overconvergent automorphic
forms}

\author{Christian Johansson}
\begin{abstract}
We present an approach to proving control theorems for overconvergent
automorphic forms on Harris-Taylor unitary Shimura varieties based
on a comparison between the rigid cohomology of the multiplicative
ordinary locus and the rigid cohomology of the overlying Igusa tower,
the latter which may be computed using the Harris-Taylor version of
the Langlands-Kottwitz method. We also prove a higher level version,
generalizing work of Coleman.
\end{abstract}
\maketitle

\section{Introduction}

In \cite{Col}, Coleman proved that if $f$ is an overconvergent modular
form of weight $k$ and tame level $\Gamma_{1}(N)$ which is an eigenform
for $U_{p}$ with slope (i.e. the $p$-adic valuation of the eigenvalue)
less than $k-1$, then $f$ is in fact a (classical) modular form
of weight $k$ for the congruence subgroup $\Gamma_{1}(N)\cap\Gamma_{0}(p)$
using an analysis of the cohomology of the ordinary locus of the modular
curve. This theorem was later reproved by Kassaei (\cite{Kas}) using
a very different geometric method inspired by work of Buzzard and
Taylor on the Artin conjecture (\cite{BT}). Since then this method
has been generalised more general Shimura varieties, but no progress
had been made on the cohomological method. In \cite{Joh} we presented
a cohomological argument essentially in the context of Hilbert modular
varieties, combining the main conceptual ideas of Coleman with some
slope calculations in rigid cohomology. Independently, Tian and Xiao
(\cite{TX}) gave a similar cohomological argument for Hilbert modular
varieties, utilising geometric results on the Ekedahl-Oort stratification
instead of slopes in rigid cohomology. In this paper we wish to present
an alternative to these two arguments using the cohomology of Igusa
varieties, illustrated in the context of some unitary Shimura varieties
studied by Harris and Taylor in their proof of the Local Langlands
correspondence. Our method extends easily to give a higher level version, giving a higher rank analogue of \cite[Theorem 1.1]{Col2}.

Let us briefly sketch the argument of \cite{Col}, in a language closer
to that of \cite{Joh,TX} and this paper. Let $Y$ be the
(non-compact) modular curve of level $\Gamma_{1}(N)$ over $\Zp$
($p\nmid N$) and let $\mc{E}_{k}$ be the local system $Sym^{k-2}(H^{1}(E/Y))$
in any appropriate cohomology theory where $E/Y$ is the universal
elliptic curve. Coleman proved that the rigid cohomology group $H_{rig}^{1}(Y_{\Fp}^{ord},\mc{E}_{k})$
is isomorphic to a quotient $M_{k}^{\dg}/\theta^{k-1}M_{2-k}^{\dg}$
where $M_{k^{\prime}}^{\dg}$ is the sheaf of weight $k^{\prime}$
overconvergent modular forms, $Y_{\Fp}^{ord}$ is the ordinary
locus, and further showed that if $f\in M_{k}^{\dg}$ is a generalized $U_{p}$-eigenvector
of slope less than $k-1$, then $f\notin\theta^{k-1}M_{2-k}^{\dg}$.
By comparing $H_{rig}^{1}(Y_{\Fp}^{ord},\mc{E}_{k})$
to the parabolic cohomology $H_{par}^{1}(Y_{\Qp},\mc{E}_{k})$
and some dimension counting Coleman computed $H_{rig}^{1}(Y_{\Fp}^{ord},\mc{E}_{k})$
in terms of classical modular forms and proved that such $f$ as in
the previous sentence are classical. A key point here is that the
small slope forms whose cohomology class comes from $H_{par}^{1}(Y_{\Qp},\mc{E}_{k})$
makes up ``most'' of $H_{rig}^{1}(Y_{\Fp}^{ord},\mc{E}_{k})$
and the contribution from the classical forms not of this form may
be quantified and shown to exhaust the rest of $H_{rig}^{1}(Y_{\Fp}^{ord},\mc{E}_{k})$.

The isomorphism $H_{rig}^{1}(Y_{\Fp}^{ord},\mc{E}_{k})\cong M_{k}^{\dg}/\theta^{k-1}M_{2-k}^{\dg}$
may be generalised to very general compact Shimura varieties using Faltings's dual BGG complex; the argument presented
in \cite{Joh} readily generalises as long as the basic ingredients
are present (the non-compact case is more involved and requires restricting to cusp forms, see \cite{AIP}, \cite{HLTT} and \cite{Str}) . It is also insensitive to whether one works with the
Shimura variety with full level or with Iwahori level at $p$ (the
former is the case covered in \cite{Joh}, the latter is covered in
this paper). In fact, in the setting of this paper the argument  works well even with full $p^{n}$-structure for any $n$. Similarly the observation that if $f$ has small slope
then $f\notin\theta^{k-1}M_{2-k}^{\dg}$ also generalises, using the observation in \cite{Joh} that this is
a consequence of the fact that the central character remains constant
throughout the dual BGG complex. These results, though proven in slightly
different ways in the cases at hand, are common to \cite{Joh}, \cite{TX}
and this paper and form the first half of the argument. It remains
to carry out the analogue of computing $H_{rig}^{1}(Y_{\Fp}^{ord},\mc{E}_{k})$.
In \cite{Joh}, we only computed a part of this group, which sufficed
to deduce a control theorem; in \cite{TX} the whole group was computed
using the geometric results mentioned above.

In this paper we propose that the Euler characteristic $\sum_{i}(-1)^{i+1}H_{rig}^{i}(Y_{\Fp}^{ord},\mc{E}_{k})$
may be computed in terms of classical automorphic representations
using the Igusa tower above $Y_{\Fp}^{ord}$ and a comparison
of the Lefschetz trace formula in rigid cohomology and the Arthur-Selberg
trace formula. We will be working in the context of the book \cite{HT}
in the special case when the totally real field is $\qq$;
let us now briefly recall this setting (for precise definitions see
\S 2). We let $G$ be a unitary group over $\qq$ coming from
a division algebra $B$ over an imaginary quadratic field $F$ satisfying
a list of conditions, the most important being that $G(\R)\cong GU(n-1,1)$
and that for our fixed prime $p$, $G(\Qp)\cong{\rm GL}_{n}(\Qp)\times\qq_{p}^{\times}$.
Associated with these groups are Shimura varieties $X_{U}$ ($U$
compact open subgroup of $G(\mb{A}^{\infty})$) which may be defined over $F$; when $U$ is of a certain type at $p$ the $X_{U}$ have proper
integral models that were extensively studied in \cite{HT} and \cite{TY}.
For the purposes of this introduction let us focus on the case when the level at $p$ is
Iwahori; let us call this open compact $Iw=U^{p}Iw_{p}$. The integral
models of $X_{Iw}$ have strictly semistable reduction; the special
fibre $Y_{Iw}$ has an ordinary-multiplicative locus $Y_{Iw,1}^{0}$
we will be interested in together with its tubular neighbourhood $X_{Iw}^{ord}$
inside the analytification of $X_{Iw}$. Our Shimura varieties have
overconvergent $F$-isocrystals $V^{\dg}(\xi)$ associated with
algebraic representations $\xi$ of $G$ as well as coherent sheaves
$W(\mu)$ associated with algebraic representations $\mu$ of the
parabolic subgroup $Q$ of $G_{\C}$ attached to the Shimura
datum; overconvergent automorphic forms are defined as overconvergent
sections of the $W(\mu)$ on $X_{Iw}^{ord}$.

After carrying out the steps sketched above, one arrives with an
overconvergent automorphic form of small slope appearing in $H_{rig}^{d}(Y_{Iw,1}^{0},V^{\dg}(\xi))$,
where $d=\dim\, X_{Iw}$. In fact, we make a slight refinement of
the arguments to make sure that it appears $\sum_{i}(-1)^{d+i}H_{rig}^{i}(Y_{Iw,1}^{0},V^{\dg}(\xi))$.
On top of $Y_{Iw,1}^{0}$ (as well as the other strata of $Y_{Iw}$)
Harris-Taylor and Taylor-Yoshida constructed a tower of Igusa varieties
"of the first kind"; a tower of finite Galois covers $(Ig_{U^{p},m})_{m\geq1}\ra Ig_{Iw}\cong Y_{Iw,1}^{0}$,
and computed the corresponding Euler characteristics $\sum_{i}(-1)^{d+i}H_{et}^{i}(Ig_{Iw},V(\xi))$
in terms of $\sum_{i}(-1)^{d+i}H_{dR}^{i}(X_{Iw},V(\xi))$. This is
one of the main technical results of \cite{HT} (Theorem V.5.4), known
as the ``second basic identity''. We use a comparison of Lefschetz
trace formulas in \'etale and rigid cohomology to transfer this result
to rigid cohomology. The comparison between $\sum_{i}(-1)^{d+i}H_{et}^{i}(Ig_{Iw},V(\xi))$
and $\sum_{i}(-1)^{d+i}H_{dR}^{i}(X_{Iw},V(\xi))$ is straightforward
for Hecke operators at primes away from $p$; the remaining work is
to work out what happens at $p$. In the end, one arrives at the following
theorem, which is the main result of this paper (Theorem
\ref{thm: main}) in the case of Iwahori level:
\begin{thm}
Assume that $k_{1}>...>k_{n-1}>k_{n}+n$. Let $f$ be
an overconvergent generalized Hecke eigenform of weight $(k_{1},...,k_{n})$ and $U_{p}$-slope less
than $k_{n-1}-k_{n}+1-n$. Then $f$ is classical.
\end{thm}

Here $U_{p}$ is a Hecke operator at $p$ generalising
the classical $U_{p}$ for $GL_{2}$ (see \S \ref{sub:Hecke-algebras}
for the definition) and we use tuples of integers $(k_{1}\geq...\geq k_{n-1},k_{n},w)$
as dominant weights to classify the algebraic representations $\mu$
of the Levi of $P$ (in the theorem we made a specific choice of $w$ for simplicity, correspond to the standard arithmetic normalization of the action of Hecke operators). The same strategy with very little extra effort also gives a higher level analogue (also recorded in Theorem \ref{thm: main}).

The main advantages of the strategy employed
in this paper compared that of \cite{Joh} is that one gets stronger
results, one can work at the Iwahori level (or higher level) as opposed to full level
at $p$, and although one needs an integral model, one needs to know
very little about it apart from the properness of the whole model
and smoothness and moduli interpretation of the ordinary locus (note
that we do not use the moduli interpretation of the rest of the special
fibre). The second point is important, because the Iwahori level is
the right one for constructing the eigenvariety (see \cite{AIP}). In
\cite{Joh} this was not a problem, as in that setting the canonical
subgroup allows one to pass from full level to Iwahori level, but
in general (e.g. in our setting here for $n\geq3$) the canonical
subgroup only takes one to some parahoric (non-Iwahori) level. Thus
any control theorem proved on the full level Shimura variety needs
to be complemented with descent results before it can be applied to
the eigenvariety. The main disadvantage is of course that it relies
heavily on the technical results on the cohomology of the Igusa tower
proved in \cite{HT}. The construction of the Igusa tower has been
generalised to the unramified PEL (type $A$ and $C$) setting by
Mantovan (\cite{Man}) and their points have been counted and the
resulting formula stabilised by Shin (\cite{Shi1,Shi2}). It remains
however to compare the resulting formula to the Arthur-Selberg trace
formula in full generality. We should remark though that many compact
cases with ``no endoscopy'' were done in \cite{Shi4} and some compact
cases with endoscopy were done in \cite{Shi3}; we expect that
our methods generalise directly to these cases.

Control theorems for the Shimura varieties considered in this paper have been proved previously in the Iwahori case using the analytic continuation method by Pilloni and Stroh (\cite{PS2}) and later vastly generalized and extended to include higher level cases by Bijakowski in a series of papers (\cite{Bij,Bij2,Bij3}). We remark that the only reason for restricting our attention in this paper to the special case when $F$ is an imaginary quadratic field (in the context of the Shimura varieties considered in \cite{HT}, as opposed to $F$ being a CM field containing an imaginary quadratic field) was the need for the $\mu$-ordinary locus to be affine. This is known when the $\mu$-ordinary locus is actually the ordinary locus (i.e. when the ordinary locus is non-empty, this is a consequence of the existence of the Hasse invariant) and the assumption that $F$ is imaginary quadratic (together with the other assumptions made in \cite{HT}) forces the ordinary locus to be non-empty. The ordinary locus can be non-empty in other cases as well, but we felt that the extra generality would only add to the complexity of the notation while the proofs would still be all the same. After the first version of this paper this paper was written we learned that Goldring and Nicole (\cite{GN}) has proved the affineness of the $\mu$-ordinary locus for general unitary Shimura varieties at a prime of good reduction. In particular, this means that one can run the arguments of this paper in the case when $F$ is CM with $p$ unramified to obtain control theorems for Shimura varieties with vanishing ordinary locus, as long as one can extend Hida's calculations for integrality of Hecke operators at $p$. These calculations were done in \cite{Bij3}, which proves control theorems in at Iwahori level in many cases with empty ordinary locus. With this, the arguments of this paper generalise to give higher level control theorems as well.

Let us now outline the contents of the papers. Section \ref{sec: definitions}
is devoted to setting up the definitions of the groups, Shimura varieties
and the Igusa varieties that we will be working with. In \S \ref{sec:Hecke-Actions}
we discuss the relation between our Shimura varieties and Igusa varieties
as towers with the action of adelic groups and define the automorphic
sheaves and the Hecke algebras that we will be working with, and record
a few relations. Finally, the computation of the cohomology is carried
out in \S \ref{sec:Computation-of-Cohomology}, first from the point
of view of overconvergent automorphic forms and then in terms of classical
automorphic representations. The comparison between the two viewpoints
gives the main theorem.

\subsection*{Acknowledgements}

It is a pleasure to thank my PhD adviser Kevin Buzzard for suggesting
I try to generalise Coleman's method for my thesis and for his help
and advice. I wish to thank Teruyoshi Yoshida for enlightening conversations
and in particular for answering a question that led to the discovery
of this method, as well as for sending me an updated version of his
Harvard PhD thesis. I also wish to thank
James Newton, Vincent Pilloni and Shu Sasaki for useful conversations, as well as an anonymous referee for comments and corrections. During my doctoral studies I was supported by the EPSRC and it is
a pleasure to thank them, as well as thanking the Fields Institute
for their support and hospitality during a stay in early 2012 when
this work was begun. The author was also supported by EPSRC Grant EP/J009458/1 and NSF Grant DMS-1128155 during various stages of the preparation and revision of this paper.

\section{Groups and Shimura Varieties\label{sec: definitions}}

\subsection{The groups}

The main reference for this section is \cite[\S I.7]{HT}. We will,
to the best of our ability, follow the notation and terminology set
up there. Let $n\geq2$ be a fixed integer, $p$ a fixed rational
prime and $F$ an imaginary quadratic field in which $p$ splits as $p=uu^{c}$, where $-^{c}$ denotes the nontrivial automorphism of $F$ (complex conjugation). We will let $B$ denote a division algebra with centre $F$ such that
\begin{itemize}
\item ${\rm dim}_{F}B=n^{2}$;
\item $B^{op}\cong B\otimes_{F,c}F$;
\item $B$ is split at $u$;
\item at any non-split place $x$ of $F$, $B_{x}$ is
split;
\item at any split place $x$ of $F$ either $B_{x}$ is split or $B_{x}$
is a division algebra;
\item if $n$ is even, then $1+n/2$ is congruent modulo $2$ to the number
of places of $\qq$ above which $B$ is ramified.
\end{itemize}

We will write ${\rm det}_{B/F}$ resp. ${\rm tr}_{B/F}$ for the reduced
norm resp. trace of $B/F$, and define ${\rm det}_{B/\qq}={\rm det}_{F/\qq}\circ{\rm det}_{B/F}$
and ${\rm tr}_{B/\qq}={\rm tr}_{F/\qq}\circ{\rm tr}_{B/F}$.
Pick an involution $\ast$ of the second kind on $B$, which we assume
is positive (i.e. ${\rm tr}_{B/\qq}(xx^{\ast})>0$ for all
nonzero $x\in B$). Pick $\beta\in B^{\ast=-1}$; the pairing
\[
(x,y)={\rm tr}_{B/\qq}(x\beta y^{\ast})
\]
is an alternating $\ast$-Hermitian pairing $B\times B \ra \qq$,
and we define a new involution of the second kind by $x^{\#}=\beta x^{\ast}\beta^{-1}$.
We have that
\[
(bxc,y)={\rm tr}_{B/\qq}(bxc\beta y^{\ast})={\rm tr}_{B/\qq}(xc\beta y^{\ast}b)={\rm tr}_{B/\qq}(x\beta(\beta^{-1}c\beta)y^{\ast}b)=
\]
\[
=(x,((\beta^{-1}c\beta)y^{\ast}b)^{\ast})=(x,b^{\ast}yc^{\#})
\]
as $(\beta^{-1})^{\ast}=(\beta^{\ast})^{-1}=-\beta^{-1}$. We let
$G/\qq$ be algebraic group whose $R$-points ($R$ any $\qq$-algebra)
are
\[
G(R)=\left\{ (g,\lambda)\in(B^{op}\otimes_{\qq}R)^{\times}\times R^{\times}\mid gg^{\#}=\lambda\right\}
\]
When $x$ is a place of $\qq$ that splits as $yy^{c}$ in $F$,
$y$ induces an isomorphism
\[
G(\qq_{x})\cong(B^{op}\otimes_{\qq}F_{y})^{\times} \times \qq_{x}^{\times}
\]
In particular, we get an isomorphism
\[
G(\Qp)=(B_{u}^{op})^{\times}\times\Qp^{\times}
\]
where we have written $B_{u}^{op}=B^{op}\otimes_{F}F_{u}$.

We assume that (see \cite[Lemma I.7.1]{HT})
\begin{itemize}
\item if $x$ is a rational prime which does not split in $F$, then $G$
is quasi-split at $x$;
\item the pairing $(-,-)$ on $B\otimes_{\qq}\R$ has invariants
$(1,n-1)$.
\end{itemize}

We will fix a maximal order $\Lambda_{u}=\mathcal{O}_{B_{u}}$ in
$B_{u}$. The pairing $(-,-)$ gives a perfect duality between $B_{u}$
and $B_{u^{c}}$ and we define $\Lambda_{u}^{\vee}\subseteq B_{u^{c}}$
to be the dual of $\Lambda_{u}\subseteq B_{u}$. Then
\[
\Lambda=\Lambda_{u}\oplus\Lambda_{u}^{\vee}\subseteq B\otimes_{\qq}\Qp
\]
is a $\Zp$-lattice in $B\otimes_{\qq}\Qp$
and $(-,-)$ restricts to a perfect pairing $\Lambda \times \Lambda \ra \Zp$.
There is a unique $\ZZ_{(p)}$-order $\mc{O}_{B}$ of
$B$ such that $\mc{O}_{B}^{\ast}=\mc{O}_{B}$ and $\mc{O}_{B,u}=\mc{O}_{B_{u}}$
(where $\mc{O}_{B,u}=\mc{O}_{B}\otimes_{\mc{O}_{F,(u)}}\mc{O}_{F_{u}}$,
$\mc{O}_{F,(u)}$ being the algebraic localization of $\mc{O}_{F}$
at $u$). The stabilizer of $\Lambda$ in $G(\Qp)$ is
$(\mc{O}_{B,u}^{op})^{\times} \times \Zp^{\times}$.
Fix an isomorphism $\mc{O}_{B,u}\cong M_{n}(\mc{O}_{F_{u}})=M_{n}(\Zp)$.
By composing it with the transpose map $-^{t}$ we get an isomorphism
$\mc{O}_{B,u}^{op}\cong M_{n}(\Zp)$. If we let $\epsilon\in M_{n}(\ZZ)$
denote the idempotent whose $(i,j)$th entry is $1$ if $i=j=1$ and
$0$ otherwise, as well as the corresponding idempotent in $\mc{O}_{B,u}^{op}$
(using our isomorphism), we get an isomorphism
\[
\epsilon\Lambda_{u}\cong(\mc{O}_{F_{u}}^{n})^{\vee}
\]
We give $\epsilon\Lambda_{u}$ the basis $e_{1}$,...,$e_{n}$ corresponding
to the standard dual basis of $(\mc{O}_{F_{u}}^{n})^{\vee}$.
The induced isomorphism $B_{u}^{op}\cong M_{n}(F_{u})=M_{n}(\Qp)$
gives us an isomorphism
\[
G(\Qp)\cong{\rm GL}_{n}(F_{u})\times\Qp^{\times}={\rm GL}_{n}(\Qp)\times\Qp^{\times}
\]

\subsection{Shimura varieties and integral models}

The main reference for this section is \cite[\S III]{HT}. Define
\[
C=(B^{op}\otimes_{\qq}\R) \times \R
\]
Then there is a unique conjugacy class $\mf{H}$ of $\mathbb{R}$-algebra
homomorphisms
\[
h\,:\,\C \ra C
\]
such that $h(\bar{z})=h(z)^{\ast}$ and the pairing $u,v\mapsto(u,v.h(i))$
is positive or negative definite and symmetric. The pair $(G,\mf{H})$
is Shimura datum whose reflex field is $F$ with respect to the unique
infinite place of $F$. For any compact open subgroup $U\sub G(\mb{A}^{\infty})$
let us write $Sh_{U}$ for the canonical model of the Shimura variety
\[
G(\qq)\backslash G(\mb{A}^{\infty}) \times \mf{H}/U
\]
Following \cite[\S III.1]{HT}, we define a contravariant functor
$X_{U}$ from locally Noetherian schemes over $F$ to sets by letting
$X_{U}(S)$, for any connected locally Noetherian $F$-scheme $S$,
be the set of equivalence classes of quadruples $(A,\lambda,i,\bar{\eta})$
where

\begin{itemize}
\item $A$ is an abelian scheme over $S$ of dimension $n^{2}$;
\item $\lambda\,:\, A \ra A^{\vee}$ is a polarisation;
\item $i\,:\, B \inj {\rm End}_{S}(A) \otimes_{\ZZ}\qq$;
is an algebra homomorphism such that $(A,i)$ is compatible (\cite[p. 90]{HT}) and $\lambda\circ i(b)=i(b^{\ast})^{\vee}\circ\lambda$ for
all $b\in D$;
\item $\bar{\eta}$ is a $U$-level structure on $A$.
\end{itemize}

Two quadruples are equivalent if there is a quasi-isogeny between
the corresponding abelian schemes preserving the structure. See \cite[p. 91]{HT} for more details. When $U$ is neat, $X_{U}$ is representable
by a smooth projective variety over $F$, which we will also denote
by $X_{U}$, and above it we have an abelian scheme $A_{U}$ fitting
in a universal quadruple $(A_{U},\lambda_{U},i_{U},\bar{\eta}_{U})$.
We have
\begin{equation}
X_{U}=Sh_{U}\label{eq: disjoint union}
\end{equation}
because ${\rm ker}^{1}(\qq,G)$ that complicates the
situation in general vanishes since $F^{+}=\qq$ (since the center of $G$ is ${\rm Res}^{F}_{\qq}\mb{G}_{m}$ which is a cohomologically trivial torus; cf. the discussion in \cite[p. 319-322]{Hid1}). By varying
$U$, we get a projective system $(X_{U})_{U}$ of varieties where
the morphisms between are finite, surjective and \'etale and similarly
for the $A_{U}$.

Next we recall the integral models defined in \cite[\S III.4]{HT}.
Put, for $m \in \ZZ_{\geq 0}$,
\[
U_{p}(m)=K_{n}(m) \times \Zp^{\times} \sub{\rm GL}_{n}(\Qp) \times \Qp^{\times}=G(\Qp)
\]
where $K_{n}(m)\sub {\rm GL}_{n}(\Zp)$ is the subgroup
of matrices reducing to the identity modulo $p^{m}$. For $U^{p}\sub G(\mb{A}^{p,\infty})$,
we denote the product $U^{p}\times U_{p}(m)\sub G(\mb{A}^{\infty})$
by $U(m)$. We will denote $U(0)$ by $U$. Define a contravariant
functor $\mc{X}_{U(m)}$ from locally Noetherian $\Zp$-schemes
to sets by letting $\mc{X}_{U(m)}(S)$, for any connected locally
Noetherian $\Zp$-scheme $S$, be the set of equivalence
classes of quintuples $(A,\lambda,i,\bar{\eta}^{p},\alpha)$ where

\begin{itemize}
\item $A/S$ is an abelian scheme of dimension $n^{2}$;
\item $\lambda\,:\, A \ra A^{\vee}$ is a prime-to-$p$ polarisation;
\item $i\,:\,\mc{O}_{B} \inj {\rm End}_{S}(A) \otimes_{\ZZ} \ZZ_{(p)}$
is an algebra homomorphism such that $(A,i)$ is compatible and $\lambda\circ i(b)=i(b^{\ast})^{\vee}\circ\lambda$
for all $b\in\mc{O}_{B}$;
\item $\bar{\eta}^{p}$ is a $U^{p}$-level structure on $A$;
\item $\alpha\,:\, p^{-m}\epsilon \Lambda_{u}/ \epsilon \Lambda_{u} \ra \mc{G}_{A}[p^{m}](S)=\epsilon A[u^{m}](S)$
is a Drinfeld $p^{m}$-level structure.
\end{itemize}

Equivalence here is as before, but using prime-to-$p$ quasi-isogenies.
See \cite[p.109]{HT} for more details and the proof of the representability
of $\mc{X}_{U(m)}$ when $U(m)$ is neat. We will also denote the
representing scheme over $\Zp=\mc{O}_{F,u}$ by $\mc{X}_{U(m)}$.
Its generic fibre is $X_{U(m)/\Qp}$ and we will denote
its special fibre by $\ol{X}_{U(m)}$. Similarly we will let
$\mc{A}_{U(m)}$ denote the universal abelian variety over $\mc{X}_{U(m)}$;
its generic fibre is $A_{U(m)/\Qp}$ and its special fibre
will be denoted by $\ol{A}_{U(m)}$. We will let $\mc{G}$
denote the Barsotti-Tate group $\epsilon \mc{A}_{U(m)}[u^{\infty}]$;
over any base in which $p$ is nilpotent it is one-dimensional and
compatible of height $n$ (see \cite[\S II]{HT} for terminology).
$\mc{G}$ defines a stratification $\left(\ol{X}_{U(m),[h]}\right)_{0\leq h\leq n-1}$
of $\ol{X}_{U(m)}$ where $\ol{X}_{U(m),[h]}$ is the
closed reduced subscheme of $\ol{X}_{U(m)}$ whose geometric
points $s$ are those for which the maximal \'etale quotient of $\mc{G}_{s}$
has height $\leq h$. In the case $m=0$, each $\ol{X}_{U,[h]}$
is smooth (see remark near the end of \cite[p. 113]{HT}; we will
not need this). We let $\ol{X}_{U(m),(h)}=\ol{X}_{U(m),[h]}-\ol{X}_{U(m),[h-1]}$
for $h\geq1$ and $\ol{X}_{U(m),(0)}=\ol{X}_{U(m),[0]}$;
these are smooth of pure dimension $h$ for arbitrary $m$ (\cite[Corollary III.4.4]{HT}). The stratification $\ol{X}_{U(m)}=\coprod_{h=0}^{n-1}\ol{X}_{U(m),(h)}$
is the Newton stratification of $\ol{X}_{U(m)}$. Denote by
$\mc{G}^{(h)}$ the restriction of $\mc{G}$ to $\ol{X}_{U(m),(h)}$.
All these varieties etc. fit into projective systems as we vary $U^{p}$
and $m$, with transition maps being finite, flat and surjective.
If we keep $m$ fixed and only vary $U^{p}$ the maps are \'etale as well.

\subsection{Semistable integral models of Iwahori level}

The reference for this section is \cite[\S 3]{TY}. Let $B_{n}$ denote
the standard upper triangular Borel subgroup of $\GL_{n}$ and
let $I_{n}$ denote the corresponding Iwahori subgroup of $\GL_{n}(\Zp)$
defined as the preimage of $B_{n}(k_{u})=B_{n}(\Fp)$ under
the reduction map $\GL_{n}(\Zp) \ra \GL_{n}(\Fp)$
(here $k_{u}$ denotes the residue field of $F_{u}$). We set, for
any compact open subgroup $U^{p} \sub G(\mb{A}^{p,\infty})$,
\[
Iw_{p}=I_{n} \times \Zp^{\times} \sub \GL_{n}(\Qp) \times \Qp^{\times} = G(\Qp)
\]
\[
Iw=U^{p}\times Iw_{p}\sub G(\mb{A}^{\infty})
\]
Define a contravariant functor $\mc{X}_{Iw}$ from locally Noetherian
$\Zp$-schemes to sets by letting $\mc{X}_{Iw}(S)$,
for any connected locally Noetherian $\Zp$-scheme $S$,
be the set of equivalence classes of quintuples $(A,\lambda,i,\bar{\eta}^{p},\mc{C})$
where $(A,\lambda,i,\bar{\eta}^{p})$ is as for $\mc{X}_{U}$
and $\mc{C}$ is a chain of isogenies ( $\mc{G}_{A}=\epsilon A[u^{\infty}]$):
\[
\mc{G}_{A}=\mc{G}_{0}\ra \mc{G}_{1} \ra ... \ra \mc{G}_{n} = \mc{G}_{A}/\mc{G}_{A}[p]
\]
of compatible Barsotti-Tate groups, each of degree $p=\#k_{u}$, with
composite equal to the canonical map $\mc{G}_{A} \ra \mc{G}_{A}/\mc{G}_{A}[p]$.
Equivalently, we may view it as a flag
\[
0=\mc{C}_{0} \sub \mc{C}_{1} \sub ... \sub \mc{C}_{n} = \mc{G}_{A}[p]
\]
of finite flat subgroup schemes, with each successive quotient of
degree $p$. $\mc{X}_{Iw}$ is representable by a scheme over
$\Zp$ which we will also denote by $\mc{X}_{Iw}$;
its generic fibre is $X_{Iw/\Qp}$ and we will denote its
special fibre by $Y_{Iw}$. We let $\mc{A}_{Iw}$ denote the
universal abelian variety over $\mc{X}_{Iw}$; its generic fibre
is $A_{Iw/\Qp}$ and we denote by $\ol{A}_{Iw}$
its special fibre. Furthermore, we let
\[
\mc{G}=\mc{G}_{0} \ra \mc{G}_{1} \ra ... \ra \mc{G}_{n}
\]
denote the universal chain of isogenies on $\mc{X}_{Iw}$, and
let $Y_{Iw,i}$ denote the closed subscheme of $Y_{Iw}$ over which
$\mc{G}_{i-1} \ra \mc{G}_{i}$ has connected kernel.
Here $\mc{G}=\epsilon \mc{A}_{Iw}[u^{\infty}]$ by abuse
of notation; since this $\mc{G}$ is the pullback of the previous
$\mc{G}$ via the natural map $\mc{X}_{Iw} \ra \mc{X}_{U}$
we hope that this should not cause any confusion. We have the following:

\begin{prop}
(\cite[Proposition 3.4]{TY})

1) $\mc{X}_{Iw}$ is regular, has pure dimension $n$ and the
natural map $\mc{X}_{Iw} \ra \mc{X}_{U}$ is finite
and flat.

2) Each $Y_{Iw,i}$ is smooth over $\Fp$ of pure dimension
$n-1$, $Y_{Iw}=\bigcup_{i}Y_{Iw,i}$ and for $i\neq j$ $Y_{Iw,i}$
and $Y_{Iw,j}$ have no common connected component. In particular,
$\mc{X}_{Iw}$ has strictly semistable reduction and
for each $S \sub \left\{ 1,...,h\right\} $, $Y_{Iw,S}=\bigcap_{I\in S}Y_{Iw,i}$
is smooth of pure dimension $n-\#S$.
\end{prop}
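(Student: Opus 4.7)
The plan is to prove everything by local analysis of the deformation functor at closed geometric points of $Y_{Iw}$. Since $\mathcal{X}_U$ is already known to be smooth of relative dimension $n-1$ over $\mathbb{Z}_p$, the claims for $\mathcal{X}_{Iw}$, together with finiteness and flatness of $\mathcal{X}_{Iw} \to \mathcal{X}_U$, will follow once I can compute the completed local ring $\widehat{\mathcal{O}}_{\mathcal{X}_{Iw},x}$ at each closed geometric point $x \in Y_{Iw}$, verify it is regular of Krull dimension $n$, and identify explicit defining equations for the $Y_{Iw,i}$ inside it.

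First I would apply Serre-Tate to reduce the deformation functor of $(A,\lambda,i,\bar\eta^p,\mathcal{C})$ to that of the Barsotti-Tate group $A[p^\infty]$ with its additional structure; the prime-to-$p$ level structure deforms uniquely. Via Morita equivalence and the splitting $p = uu^c$, this in turn reduces to deforming the compatible one-dimensional Barsotti-Tate group $\mathcal{G}_A = \epsilon A[u^\infty]$ of height $n$ together with a chain of isogenies $\mathcal{G}_0 \to \mathcal{G}_1 \to \cdots \to \mathcal{G}_n$, each of degree $p$, whose composite is identified with multiplication by $p$ on $\mathcal{G}_0$.

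Next, using Grothendieck-Messing crystalline deformation theory, I would describe this deformation ring explicitly. Each $\omega_{\mathcal{G}_i}$ is an invertible module on the deformation ring, and each isogeny $\mathcal{G}_{i-1} \to \mathcal{G}_i$ induces a map $\omega_{\mathcal{G}_i} \to \omega_{\mathcal{G}_{i-1}}$ which, after local trivializations, is multiplication by an element $X_i$. Since the composite of the chain is multiplication by $p$ on $\omega_{\mathcal{G}_0}$, one obtains the relation $X_1 X_2 \cdots X_n = p \cdot v$ for a unit $v$. At a point $x$ where all $X_i$ lie in the maximal ideal, this gives
\[
\widehat{\mathcal{O}}_{\mathcal{X}_{Iw},x} \;\cong\; \mathcal{O}^{\mathrm{ur}}[[X_1,\ldots,X_n]]/(X_1 \cdots X_n - p v),
\]
where $\mathcal{O}^{\mathrm{ur}}$ denotes the ring of Witt vectors of $k(x)$; at shallower points some $X_i$ become units and the local ring simplifies accordingly, remaining regular of Krull dimension $n$. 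The displayed ring is manifestly regular and has strictly semistable reduction, giving part (1); the finiteness and flatness of $\mathcal{X}_{Iw} \to \mathcal{X}_U$ then follow since the target is regular of the same dimension. The stratum $Y_{Iw,i}$ is locally cut out by $X_i = 0$, so mod $p$ the local ring becomes $k(x)[[X_1,\ldots,X_n]]/(X_1 \cdots X_n)$; from this the identity $Y_{Iw} = \bigcup_i Y_{Iw,i}$, the smoothness of each $Y_{Iw,S}$ of pure dimension $n - \#S$, and the fact that distinct $Y_{Iw,i}$ share no connected component (their local intersection is a proper closed subscheme of each) all read off at once.

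The main obstacle is the geometric identification of the local equations $X_i$ with the loci where the successive kernels of the isogeny chain are connected, together with the verification that the product of the $n$ induced maps on cotangent spaces equals multiplication by $p$ up to a unit. This is the local-model computation for Iwahori-level $\mathrm{GL}_n$, developed by Drinfeld and de Jong; once it is in hand the regularity, semistability, and smoothness assertions all follow formally from the shape of the local equation.
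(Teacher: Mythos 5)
The paper does not actually supply a proof of this proposition; it is quoted verbatim from Taylor--Yoshida (\cite{TY}, Proposition 3.4), so your reconstruction should be compared against the argument there. Your strategy --- reduce by Serre--Tate to deformations of $\mathcal{G}_A=\epsilon A[u^\infty]$ with its isogeny chain, use Grothendieck--Messing to translate the chain of degree-$p$ isogenies into maps on invertible cotangent modules, trivialize to get the relation $X_1\cdots X_n = p\cdot(\text{unit})$, and read off regularity, semistability and the strata $Y_{Iw,i}=\{X_i=0\}$ --- is precisely the Drinfeld-type local-model computation that Taylor--Yoshida carry out. Your identification of the main obstacle (the geometric identification of the $X_i$ with the connected-kernel loci and the $X_1\cdots X_n=p$ relation) is correctly placed: this is where the actual work in \cite{TY} lies.

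One step is asserted too quickly. You say that finiteness and flatness of $\mathcal{X}_{Iw}\to\mathcal{X}_U$ ``follow since the target is regular of the same dimension.'' Equal dimension and regularity of the target do not by themselves give finiteness. What one actually uses is that $\mathcal{X}_{Iw}$ is a closed subscheme of a flag bundle over $\mathcal{X}_U$, hence \emph{proper} over $\mathcal{X}_U$, and that the fibers are finite (the set of admissible isogeny chains of $\mathcal{G}_A[p]$ is finite); proper plus quasi-finite gives finite. Flatness then follows from miracle flatness, using that the local ring $\mathcal{O}^{\mathrm{ur}}[[X_1,\ldots,X_n]]/(X_1\cdots X_n - pv)$ is a complete intersection (hence Cohen--Macaulay), the target is regular, and source and target have the same dimension. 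These are routine, but they should be spelled out rather than subsumed into ``regular of the same dimension.'' With that fix your outline matches the \cite{TY} proof in both structure and substance.
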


We write
\[
Y_{Iw,S}^{0}=Y_{Iw,S}-\bigcup_{T \supsetneq S}Y_{Iw,T}
\]
and we let $\ol{A}_{Iw,S}$ and $\mc{G}_{S}$ (resp. $\ol{A}_{Iw,S,0}$
and $\mc{G}_{S}^{0}$) denote the restriction of $\ol{A}_{Iw}$
resp. $\mc{G}$ to $Y_{Iw,S}$ (resp. $Y_{Iw,S}^{0}$). When
$S$ is singleton $\left\{ i\right\} $ we will write the subscript
as $i$ instead of $\left\{ i\right\} $. All varieties defined here
fit into projective systems as we vary $U^{p}$, and the transition
maps are finite, \'etale and surjective. Finally, note that under the
natural map $Y_{Iw}\ra \ol{X}_{U}$, $\bigcup_{\#S=n-h}Y_{Iw,S}^{0}$
is the inverse image of $\ol{X}_{U,(h)}$, in particular we
have maps $Y_{Iw,S}^{0} \ra \ol{X}_{U,(n-\#S)}$ for all
$S$. We also have a forgetful map $\mc{X}_{U(1)} \ra \mc{X}_{Iw}$.
It is given by sending a quintuple $(A,\lambda,i,\bar{\eta}^{p},\alpha)$
to the quintuple $(A,\lambda,i,\bar{\eta}^{p},\mc{C})$, where
$\mc{C}$ is the flag of subgroup schemes given by letting $\mc{C}_{i}$
be the unique finite flat subgroup scheme of $\mc{G}_{A}[u]$
that has $\alpha(M_{i})$ as a full set of sections of $\mc{C}_{i}$,
where $M_{i}$ is the subspace of $p^{-1} \epsilon \Lambda_{u}/\epsilon\Lambda_{u}$
generated by $p^{-1}e_{1}$,...,$p^{-1}e_{i}$ (see \cite[Lemma
II.2.4]{HT}).

\subsection{Igusa varieties of the first kind\label{sub:Igusa-varieties}}

The main reference for this section is \cite[\S 4]{HT}. Let $s=\Spec\, \ol{\Fp}$
be a geometric point of $\ol{X}_{U,(n-1)}$. As a Barsotti-Tate
$\mc{O}_{B,p}=\mc{O}_{B} \otimes_{\ZZ_{(p)}}\Zp$-module,
$\ol{A}_{U,(n-1)}[p^{\infty}]_{s}$ decomposes as
\[
\ol{A}_{U,(n-1)}[u^{\infty}]_{s} \times \ol{A}_{U,(n-1)}[(u^{c})^{\infty}]_{s}
\]
with $\ol{A}_{U,(n-1)}[(u^{c})^{\infty}]_{s}=(\ol{A}_{U,(n-1)}[u^{\infty}]_{s})^{\vee}$
(Cartier dual), $\ol{A}_{U,(n-1)}[u^{\infty}]_{s}=(\mc{G}_{s}^{(n-1)})^{n}$
as Barsotti-Tate $M_{n}(\Zp)$-modules and $\mc{G}_{s}^{(n-1)} \cong \mu_{p^{\infty}} \times (\Qp/\Zp)^{n-1}$
as Barsotti-Tate groups. The Igusa variety of the first kind $Ig_{U^{p},m}/\ol{X}_{U,(n-1)}$
is defined to be the moduli space for isomorphisms
\[
\alpha^{et}\,:\,(p^{-m}\Zp/\Zp)^{n-1} \tilde{\ra}\mc{G}^{et}[u^{m}].
\]
See \cite[p. 121]{HT}. The forgetful morphism $Ig_{U^{p},m} \ra \ol{X}_{U,(n-1)/\bar{k}}$
is a Galois cover with Galois group $\GL_{n-1}(\mc{O}_{F_{u}}/u^{m})=\GL_{n-1}(\ZZ/p^{m})$.
We think of $\GL_{n-1}$ as a factor of the Levi subgroup $\GL_{1} \times \GL_{n-1}$
of $\GL_{n}$ of block diagonal matrices with square blocks of
side lengths $1$ and $n-1$. The $Ig_{U^{p},m}$ fit together in
a projective system when varying $U^{p}$ and $m$, with the transition
maps being finite and \'etale.

We also need to recall (a special case of) the Iwahori-Igusa variety
of the first kind $Ig_{Iw}$ defined in the beginning of
\cite[\S 4]{TY} (there denoted by $I_{U}^{(n-1)}$). It is defined as the
moduli space of chains of isogenies
\[
\mc{G}^{(n-1),et}=\mc{G}_{1} \ra \mc{G}_{2} \ra ... \ra \mc{G}_{n} = \mc{G}^{(n-1),et}/\mc{G}^{(n-1),et}[u]
\]
over $\ol{X}_{U,(n-1)}$. The natural map $Ig_{Iw} \ra \ol{X}_{U,(n-1)}$
is finite and \'etale, and the natural map $Ig_{U^{p},1} \ra Ig_{Iw}$
is Galois with Galois group $B_{n-1}(\Fp)$. Moreover,
there is a natural map $Y_{Iw,1}^{0} \ra Ig_{Iw}$ defined
by taking \'etale quotients in the chain of isogenies on $Y_{Iw,1}^{0}$;
by \cite[Lemma 4.1]{TY} it is a finite map that is bijective on
geometric points. In fact it is an isomorphism, the map in the other
direction coming from augmenting the chain of isogenies
\[
\mc{G}^{(n-1),et}=\mc{G}_{1} \ra \mc{G}_{2} \ra ...\ra \mc{G}_{n} = \mc{G}^{(n-1),et}/ \mc{G}^{(n-1),et}[w]
\]
with the $p$-th power Frobenius morphism $\mc{G}^{(n-1)} = \mc{G}_{0} \overset{Fr}{\ra} \mc{G}_{1} = \mc{G}^{(n-1),(q)} = \mc{G}^{(n-1),et}$
(note that this is special to $Y_{Iw,1}^{0}$; it does not hold for
$Y_{Iw,1}^{i}$ when $i \neq 0$). To summarize, we have that the Igusa
varieties $Ig_{U^{p},m}$ form a projective system of Galois covers
of $Y_{Iw,1}^{0}$, and the Galois group of $Ig_{U^{p},1} \ra Y_{Iw,1}^{0}$
is $B_{n-1}(\Fp)$.

\section{Hecke Actions and Automorphic Forms\label{sec:Hecke-Actions}}

In this section we will first relate our Igusa varieties to the Shimura
varieties and then define automorphic vector bundles and overconvergent
$F$-isocrystals on them. We will then go on to discuss the various Hecke algebras acting on each finite
level and the commutative subalgebra (the Atkin-Lehner ring) that
acts on automorphic forms and that we will use in our control theorems. Finally, the last section will discuss $p$-divisibility of the $U_{p}$-operator in this context, following Hida.

\subsection{\label{sub:Hecke-actions}Hecke actions on the Shimura varieties
and Igusa varieties}

The action of $G(\mb{A}^{\infty})$ on the system $(\ol{X}_{U(m)})_{U^{p},m}$
is described on p.109 of \cite{HT}. On \cite[p.116]{HT} a decomposition
\[
\ol{X}_{U(m),(h)}=\coprod_{M} \ol{X}_{U(m),M}
\]
coming from the Drinfeld level structure (see \cite[Lemma II.2.1(5)]{HT}) is defined. Here the $M$ range over free rank $n-h$ direct summands
of $p^{-n}\epsilon \Lambda_{u}/ \epsilon \Lambda_{u}$. Now let $M$
be a free rank $n-h$ direct summand of $\epsilon \Lambda_{u}$, and let
$P_{M} \sub {\rm Aut}(\epsilon\Lambda_{u})$ denote the parabolic
subgroup stabilizing $M$. If we set $\ol{X}_{U(m),M}:= \ol{X}_{U(m),p^{-m}M/M}$,
then the $\ol{X}_{U(m),M}$ form a projective system with an
action of the group
\[
G_{M}(\mb{A}^{\infty}) := G(\mb{A}^{p,\infty}) \times P_{M}(\Qp) \times \Qp^{\times}
\]
From now on, we fix $M=\left\langle e_{1}\right\rangle \sub \epsilon\Lambda_{u}$.
If we look at the forgetful map $\ol{X}_{U(1)} \ra Y_{Iw}$,
we see that the preimage of $Y_{Iw,1}^{0}$ consists of those $(A,\lambda,i,\bar{\eta}^{p},\alpha)$
for which $\alpha(p^{-1}e_{1})=0$, i.e. the preimage is $\ol{X}_{U(1),M}$.
Thus we see that $\ol{X}_{U(1),M} \ra Y_{Iw,1}^{0}$
is Galois with Galois group $B_{n-1}(\Fp)$ and that the
$(\ol{X}_{U(m),M})_{m\geq1}$ form a tower above $Y_{Iw,1}^{0}$
(as well as above $\ol{X}_{U,(n-1)}=\ol{X}_{U,M}$).

Next let us consider Igusa varieties. We follow \cite[p. 122
ff]{HT} (but note our choices of $h=n-1$ and $M=\left\langle e_{1}\right\rangle $).
The projective system $Ig_{U^{p},m}$ carries an action of
\[
G(\mb{A}^{p,\infty}) \times (\ZZ \times \GL_{n-1}(\Qp))^{+} \times \Qp^{\times}
\]
where $(\ZZ \times \GL_{n-1}(\Qp))^{+} := \left\{ (c,g)\in\ZZ \times \GL_{n-1}(\Qp) \mid p^{-c}g\in M_{n-1}(\Zp)\right\} $.
We denote by $j$ the surjection $\epsilon\Lambda_{u} \ra \Zp^{n-1}$
with kernel $M$ given by identifying $\Zp^{n-1}$
with the direct summand $\left\langle e_{2},...,e_{n}\right\rangle $.
Then define a homomorphism $j_{\ast}\,:\, P_{M}(\Qp) \ra \ZZ \times \GL_{n-1}(\Qp)$
by $g\mapsto({\rm ord}_{p}(g|_{M}),j\circ g\circ j^{-1})$ which induces
a homomorphism
\[
G_{M}(\mb{A}^{\infty}) \ra G(\mb{A}^{p,\infty}) \times \ZZ \times \GL_{n-1}(\Qp) \times \Qp^{\times}
\]
that we will also denote by $j_{\ast}$. There is a morphism
\[
j^{\ast}\,:\, Ig_{U^{p},m} \ra \ol{X}_{U(m),M}
\]
defined on \cite[p. 124]{HT}, given by sending $(A,\lambda,i,\bar{\eta}^{p},\alpha^{et})$
to $(A^{(p^{m})},\lambda^{(p^{m})},i,\bar{\eta}^{p},\alpha)$, where
$-^{(p^{m})}$ denote twisting by the $p^{m}$-power Frobenius $F^{m}$
and $\alpha=F^{m} \circ \alpha^{et}\circ j$. In fact, $j^{\ast}$ is
an isomorphism. The map $j^{\ast}Fr^{m}$ is therefore a finite purely
inseparable morphism and for $g\in G_{M}(\mb{A}^{\infty})$ such
that $j_{\ast}(g) \in G(\mb{A}^{p,\infty}) \times (\ZZ \times \GL_{n-1}(\Qp))^{+} \times \Qp^{\times}$,
the diagram
\begin{equation}
\xymatrix{Ig_{U^{p},m}\ar[r]^{j_{\ast}(g)}\ar[d]^{j^{\ast}Fr^{m}} & Ig_{U^{p},m^{\prime}}\ar[d]^{j^{\ast}Fr^{m^{\prime}}}\\
\ol{X}_{U(m),M}\ar[r]^{g} & \ol{X}_{U(m^{\prime}),M}
}
\label{eq: compatibility}
\end{equation}
commutes (where $m$ and $m^{\prime}$ are chosen so that one may
define $g$ and $j_{\ast}(g)$). It follows that this action extends
to an action of $G(\mb{A}^{p,\infty}) \times \ZZ \times \GL_{n-1}(\Qp) \times \Qp^{\times}$
on cohomology.

\subsection{Representations, motives and sheaves}

We wish to define various sheaves on our varieties that we want to
work with. Before we can do so we need to recall some basic facts
about the finite dimensional algebraic representations of our group
$G$. We have $G(\R)={\rm GU}(n-1,1)$. As is well known,
the finite dimensional representations of $\GL_{n}(\C)$
are parametrised by integers $k_{1}\geq...\geq k_{n}$, which are
dominant weights for the diagonal torus $T_{n}$ with respect to the
upper triangular Borel $B_{n}$. We parametrise complex representations
of $GU(n-1,1)$ by $n+1$-tuples of integers $(k_{1},...,k_{n},w)$
where $k_{1}\geq...\geq k_{n}$ parametrises an irreducible representation
of $U(n-1,1)$ and $w \equiv \sum k_{i}\,{\rm mod}\,2$ parametrises
a character of the split part of the center of $GU(1,n-1)$.

Algebraic representations $\xi$ of $GU(n-1,1)$ define sheaves $V_{et}(\xi)$, $V_{dR}(\xi)$, $V_{sing}(\xi)$ and $V^{\dg}(\xi)=V_{rig}(\xi)$ in various cohomology theories (\'etale, de Rham or singular on the
generic fibre, and rigid cohomology on the special fibre); see \cite[\S III.2]{HT}. We apologize here for the double notation $V^{\dg}(\xi)=V_{rig}(\xi)$; we prefer $V^{\dg}(\xi)$ and will use it throughout the text except for in the paragraph below when $V_{rig}(\xi)$ will be more convenient. It will be convenient to define these sheaves
as realizations of motives, defined using self-products of the universal
abelian variety together with an idempotent. Let us recall the construction
from \cite[\S III.2]{HT} and \cite[\S 2]{TY}. Given $\xi$,
there are integers $t_{\xi}$ and $m_{\xi}$ and an idempotent $a_{\xi}$
such that
\begin{itemize}
\item $a_{\xi}R^{m_{\xi}}\pi_{m_{\xi},\ast}\qq_{?}(t_{\xi})\cong V_{?}(\xi)$;
\item $a_{\xi}H_{?}^{j}(A,\qq_{?}(t_{\xi})=H_{?}^{j-m_{\xi}}(X,V_{?}(\xi))$ for
$j\geq m_{\xi}$ and $0$ otherwise.
\end{itemize}
for any of the above mentioned cohomology theories, where we have
used $X$ for some smooth subvariety of a Shimura
variety (in characteristic $0$ or $p$), $A$ for the universal abelian variety over $X$, $\qq_{?}$ for the constant sheaf in the relevant cohomology
theory and $\pi_{m_{\xi}}\,:\, A^{m_{\xi}}\ra X$ denotes
the projection; here $?\in \{et,dR,sing,rig\}$. $(A^{m_{\xi}},a_{\xi},t_{\xi})$ defines a classical
motive that we will denote by $V(\xi)$ (see e.g. \cite{Sch}) and the $V_{?}(\xi)$ are its realizations. Moreover $a_{\xi}$ commutes with the
action of $G(\mb{A}^{\infty})$.

We will be interested in the overconvergent $F$-isocrystals $V^{\dg}(\xi)=a_{\xi}H_{rig}^{m_{\xi}}(\ol{A}_{Iw,1,0}/Y_{Iw,1}^{0})$
on $Y_{Iw,1}^{0}$. Using the frame (\cite[Definition 3.1.5]{LeSt})
\[
Y_{Iw,1}^{0}\sub Y_{Iw} \sub \wh{\mc{X}}_{Iw}
\]
(the hat for the completion along the special fibre) we may construct
the underlying overconvergent isocrystal by considering the de Rham
realisation $V_{dR}(\xi)$ on $X_{Iw/\Qp}$; analytifying
it to the Raynaud generic fibre $X_{Iw}^{rig}$ of $\wh{\mc{X}}_{Iw}$
(which agrees with the Tate analytification of $X_{Iw/\Qp}$
by properness) and finally applying Berthelot's $j_{Y_{Iw,1}^{0}}^{\dg}$-functor
(\cite[\S 5.1]{LeSt} ; it is probably easiest to use \cite[Proposition 5.1.12]{LeSt}
as the definition).

Next, consider the parabolic $Q=Q_{n-1,1} \times \C^{\times}$
of $G(\C) \cong \GL_{n}(\C) \times \C^{\times}$
where $Q_{n-1,1}$ is the standard $(n-1,1)$ parabolic of $\GL_{n}(\C)$
with last row of the form $(0\,...\,0\,\ast)$. Weights $(k_{1},...,k_{n},w)$
for $G$ are dominant for the Levi $L_{Q}$ of $Q$ with respect to
$B_{n}\cap L_{Q}$ if and only if $k_{1}\geq...\geq k_{n-1}$. The
automorphic vector bundle construction of Milne and Harris (\cite{Mil},
see also \cite[p. 101]{HT} for the construction over $\C$)
produces out of finite-dimensional algebraic representations $\mu$
of $Q$ vector bundles $W(\mu)$ on $X_{Iw}(\C)$ that descend
to $F$, so we may base change them to $F_{u}=\Qp$
(this uses that $G$ is split over $F$). We define $W^{\dg}(\mu)=j_{Y_{Iw,1}^{0}}^{\dg}W(\mu)$
using the frame above. If $sp\,:\, X_{Iw}^{rig} \ra Y_{Iw}$
denotes the specialization map, let us put $X_{Iw}^{ord}=sp^{-1}(Y_{Iw,1}^{0})$.
We remark that this is not the full ordinary locus in $X_{Iw}^{rig}$,
but rather what could be called the ordinary-multiplicative locus.

We will also need the above discussion for higher levels. Let $m\geq 1$. Here we consider the frame
$$ \ol{X}_{U(m),M} \sub \ol{X}_{U(m)} \sub \wh{\mc{X}}_{U(m)}. $$
There is a specialization map $sp\, :\, X_{U(m)}^{rig} \ra \ol{X}_{U(m)}$ and we define $X_{U(m)}^{ord}:= sp^{-1}(\ol{X}_{U(m),M})$. Again we get an overconvergent $F$-isocrystal $V^{\dg}(\xi)=a_{\xi}H_{rig}^{m_{\xi}}(\ol{A}_{U(m),M}/\ol{X}_{U(m),M})$ on $\ol{X}_{U(m),M}$, whose underlying overconvergent isocrystal may be constructed by applying $j^{\dg}_{\ol{X}_{U(m),M}}$ to $V_{dr}(\xi)$ on $X_{U(m)}^{rig}$. We also have vector bundles $W(\mu)$ on $X_{U(m)/\mb{Q}_{p}}$ and we put $W^{\dg}(\mu)=j^{\dg}_{\ol{X}_{U(m),M}}W(\mu)$.

\begin{defn}
Let $m\geq 1$.
\begin{enumerate}
\item An automorphic form of weight $\mu$, tame level $U^{p}$ and Iwahori
level (resp. level $m$) at $p$ is an element of $H^{0}(X_{Iw}^{rig},W(\mu))$ (resp. $H^{0}(X_{U(m)}^{rig},W(\mu))$).

\item An overconvergent automorphic form of weight $\mu$, tame level
$U^{p}$ and Iwahori level (resp. level $m$) at $p$ is an element of $H^{0}(X_{Iw}^{rig},W^{\dg}(\mu))$ (resp. $H^{0}(X_{U(m)}^{rig},W^{\dg}(\mu))$).

\item A $p$-adic automorphic form of weight $\mu$, tame level $U^{p}$ and Iwahori level (resp. level $m$) at $p$
is an element of $H^{0}(X_{Iw}^{ord},W(\mu))$ (resp. $H^{0}(X_{U(m)}^{ord},W(\mu))$).
\end{enumerate}
\end{defn}

\begin{rem}
We will usually suppress any mention of the tame level. The tame level is assume to be fixed throughout this paper;
it plays no explicit role.
\end{rem}
We have natural inclusions 
$$H^{0}(X_{Iw}^{rig},W(\mu)) \sub H^{0}(X_{Iw}^{rig},W^{\dg}(\mu)) \sub H^{0}(X_{Iw}^{ord},W(\mu))$$
and 
$$H^{0}(X_{U(m)}^{rig},W(\mu)) \sub H^{0}(X_{U(m)}^{rig},W^{\dg}(\mu)) \sub H^{0}(X_{U(m)}^{ord},W(\mu)).$$

\subsection{Hecke algebras\label{sub:Hecke-algebras}}

In this section we will define the Hecke algebras that we will be
working with in the next section and recall some results from the
theory of smooth admissible representations of $p$-adic groups. For
any compact open subgroup $V=U^{p}V_{p}\sub G(\mb{A}^{\infty})$,
we will let $\mc{H}_{V}$ denote the full Hecke algebra of smooth,
$\qq$-valued compactly supported bi-$V$-invariant function
on $G(\mb{A}^{\infty})$ (generated over $\qq$ by the
double cosets $VgV$, $g\in G(\mb{A}^{\infty})$ via their characteristic
functions), and we will similarly let $\mc{H}_{U^{p}}$ denote
the full Hecke algebra away from $p$ (which is independent of $V$).
Throughout the rest of the paper $\mc{H}^{p} \sub \mc{H}_{U^{p}}$
will denote a fixed commutative subalgebra, assumed to the full spherical
Hecke algebra at all places for which $U^{p}$ is a hyperspecial maximal
compact subgroup.

Now consider $V=Iw$, corresponding to the compact open $Iw_{p}=I_{n} \times \Zp^{\times} \sub \GL_{n}(\Qp) \times \Qp^{\times}$.
Let $\mc{H}_{Iw_{p}}$ denote the full Hecke algebra over $\qq$
generated by the double cosets $Iw_{p}gIw_{p}$, $g \in \GL_{n}(\Qp) \times \Qp^{\times}$.
We let $\mc{H}_{p}$ denote the subalgebra generated by double
cosets of the forms
\[
U(a_{1},...,a_{n},b)=Iw_{p}(diag(p^{a_{1}},...,p^{a_{n}}),p^{b})Iw_{p}
\]
for integers $a_{1}\geq ... \geq a_{n}$ and $b$. If we forget the
$\Qp^{\times}/\Zp^{\times}$-part this algebra
is sometimes called the Atkin-Lehner ring or the ``dilating'' Hecke
algebra, see e.g. \cite[\S 6.4.1]{BC}. $\mc{H}_{p}$ is commutative
and we have that
\[
U(a_{1},...,a_{n},b)U(a_{1}^{\prime},...,a_{n}^{\prime},b^{\prime})=U(a_{1}+a_{1}^{\prime},...,a_{n}+a_{n}^{\prime},b+b^{\prime}),
\]
so the generators $U(a_{1},...,a_{n})$ form a monoid that we will
denote by $\mc{U}^{-}$. We will be interested in one particular
element of $\mc{H}_{p}$, namely $U(0,-1,...,-1,-1)$, which
we will denote by $U_{p}$. We have a coset decomposition
\[
Iw_{p}(diag(1,p^{-1},...,p^{-1}),p^{-1})Iw_{p}=\coprod_{E\in\Fp^{n-1}}\left(\left(\begin{array}{cc}
1 & p^{-1}E\\
0 & p^{-1}Id_{n-1}
\end{array}\right),p^{-1}\right)Iw_{p}
\]
where we abuse notation and let $E$ also denote an arbitrary lift
of $E$ to $\Zp^{n-1}$. We let $\mc{H}=\mc{H}^{p}\otimes_{\qq}\mc{H}_{p}$;
this is the commutative subalgebra of $\mc{H}_{Iw}$ that we
will use when talking about forms. $\mc{H}_{Iw}$ acts on our
Shimura varieties and universal abelian varieties (including integral
models) of level $Iw$ and their cohomology via the action of $G(\mb{A}^{\infty})$
on the towers; this corresponds to choosing a Haar measures on $G(\mb{A}^{p,\infty})$
and $G(\mb{Q}_{p})$ so that $U^{p}$ and $Iw_{p}$ have measure
$1$.

We may also consider a similar subalgebra at level $m\geq 1$; we will use the same notation to emphasize that our discussions of the (different) higher level cases and the Iwahori case are entirely parallel; this should (hopefully) not cause any confusion. Recall the subgroup $U_{p}(m)=K_{n}(m) \times \Zp^{\times}$. As before we define
$$ U(a_{1},...,a_{n},b) = U_{p}(m)(diag(p^{a_{1}},...,p^{a_{n}}),p^{b})U_{p}(m) $$
for any integers $a_{1}\geq ...\geq a_{n}$ and $b$. Once again the $U(a_{1},...,a_{n},b)$ generate a commutative subalgebra $\mc{H}_{p}$ of the full Hecke algebra for $U_{p}(m)$ and 
$$ U(a_{1},...,a_{n},b)U(a_{1}^{\prime},...,a_{n}^{\prime},b^{\prime})=U(a_{1}+a_{1}^{\prime},...,a_{n}+a_{n}^{\prime},b+b^{\prime}). $$
These assertions (as well as the ones above) follow from \cite[Lemma 4.1.5]{Cas}; here we pick Haar measures such that $U^{p}$ and $U_{p}(m)$ have measure $1$. Again put $\mc{H}=\mc{H}^{p}\otimes_{\Qp}\mc{H}_{p}$ and let $\mc{U}^{-}$ denote the monoid consisting of the $U(a_{1},...,a_{n},b)$. For certain computations we will need to use an auxiliary subgroup that is slightly bigger than $U_{p}(m)$. We let 
$$ K_{n}^{\prime}(m)=\{ g\in \GL_{n}(\Zp) \mid g \equiv diag(\ast,1,...,1)\, {\rm mod}\, p^{m} \} $$
and put $U^{\prime}_{p}(m)=K_{n}^{\prime}(m) \times \Zp^{\times}$. We may once again embed $\mc{U}^{-}$ into the corresponding Hecke algebra; everything goes through exactly as above.

We also need to consider Hecke algebras at $p$ acting on our Igusa
varieties. Let us first focus on the case of Iwahori level. Recall from \S \ref{sub:Hecke-actions} that the monoid
$(\ZZ \times \GL_{n-1}(\Qp))^{+} \times \Qp^{\times}$
acts on $(Ig_{U^{p},m})_{m}$, and that this action extends to an
action of $\ZZ \times \GL_{n-1}(\Qp) \times \Qp^{\times}$
on cohomology. This gives us an action of the Hecke algebra $\mc{H}_{Ig,p}$
generated by the double cosets in
\[
Iw_{Ig}\backslash(\ZZ \times \GL_{n-1}(\Qp) \times \Qp^{\times})/Iw_{Ig}
\]
where $Iw_{Ig}=0 \times I_{n-1} \times \Zp^{\times}$. Put
$\mc{H}_{Ig}=\mc{H}^{p} \otimes_{\qq} \mc{H}_{Ig,p}$.
The compatibility in equation \ref{eq: compatibility} gives us the
following:
\begin{lem}\label{tower}
\label{lem: 4}$\varinjlim H_{rig,c}^{i}(Ig_{U^{p},m},V(\xi))\cong \varinjlim H_{rig,c}^{i}(\ol{X}_{U(m),M},V(\xi))$
as $G_{M}(\mb{A}^{\infty})$-modules, where the group $\varinjlim H_{rig,c}^{i}(Ig_{U^{p},m},V(\xi))$
has the action of $G_{M}(\mb{A}^{\infty})$ given by pulling back
its natural action of $G(\mb{A}^{p,\infty}) \times \ZZ \times \GL_{n-1}(\Qp) \times \Qp^{\times}$
via $j_{\ast}$.
\end{lem}
We embed the monoid $\mc{U}^{-}$ into $\mc{H}_{Ig,p}$
by sending $U(a_{1},...,a_{n},b)$ to $Iw_{Ig}(a_{1},diag(a_{2},...,a_{n}),b)Iw_{Ig}$.
Note that $\mc{H}_{Ig,p}$ is isomorphic to the Hecke algebra
generated by the double cosets in
\[
(\Zp^{\times} \times I_{n-1} \times \Zp^{\times}) \backslash (\Qp^{\times} \times \GL_{n-1}(\Qp) \times \Qp^{\times}) / (\Zp^{\times} \times I_{n-1} \times \Zp^{\times}).
\]
We will need the following lemma in \S \ref{sub:Comp-cl}:
\begin{lem}
\label{lem: BC}Let $\pi$ be an irreducible admissible representation
of $\GL_{n}(\Qp) \times \Qp^{\times}$and consider $\sigma=\pi^{1\times \Zp^{\times}}$. Put $P=P_{M}\times \Qp^{\times}$. Let $P_{M}=L_{M}N_{M}$ be a Levi decomposition with $L_{M}$ a block diagonal Levi. Then the natural map
\[
\pi^{Iw_{p}}=\sigma^{I_{n}} \ra (\sigma_{N_{M}})^{\Zp^{\times}\times I_{n-1}} \otimes \delta_{P}^{-1}
\]
is an isomorphism of $\mc{U}^{-}$-modules. Here $\sigma_{N_{M}}$ denotes
the $N_{M}$-coinvariants (the unnormalized Jacquet module), and $\delta_{P}$
is the modular character with respect to the parabolic $P$.\end{lem}
\begin{proof}
Let $B_{n}=T_{n}U_{n}$ denote the Borel inside $P_{M}$ (notation for this proof
only). Then by \cite[ Proposition 6.4.3]{BC} the natural maps
\[
\sigma^{I_{n}} \ra (\sigma_{U_{n}})^{T_{0}} \otimes \delta_{B_{n}}^{-1},
\]
\[
(\sigma_{N_{M}})^{\Zp^{\times}\times I_{n-1}} \ra ((\sigma_{N_{M}})_{U_{n}\cap L_{M}})^{T_{0}} \otimes \delta_{B_{n}\cap L_{M}}^{-1}
\]
are isomorphisms (and $\mc{U}^{-}$-equivariant); here $T_{0}=T_{n}(\Zp)$
inside $T$. The Lemma now follows since $(\pi_{N_{M}})_{U_{n}\cap L_{M}} \cong \pi_{U_{n}}$
via the natural map, $\delta_{P}^{-1} = \delta_{B_{n}}^{-1}\delta_{B_{n}\cap L_{M}}$,
and the natural map $\sigma^{I_{n}} \ra (\sigma_{U_{n}})^{T_{0}}$ is
the composition of the natural maps $\sigma^{I_{n}} \ra (\sigma_{N_{M}})^{\Zp^{\times} \times I_{n-1}}$
and $(\sigma_{N_{M}})^{\Zp^{\times} \times I_{n-1}} \ra ((\sigma_{N_{M}})_{U_{n}\cap L_{M}})^{T_{0}}$.
\end{proof}
Let us consider the operator $U_{p} \in \mc{U}^{-}$ from above.
The corresponding element inside $\mc{H}_{Ig,p}$ is the double
coset
\[
Iw_{Ig}(0,diag(p^{-1},...,p^{-1}),p^{-1})Iw_{Ig}.
\]
$(0,diag(p^{-1},...,p^{-1}),p^{-1})$ is central, so the double coset acts
by $(0,diag(p^{-1},...,p^{-1}),p^{-1})$. Lemma \ref{lem: BC} tells us that
$U_{p}$ acts on $\pi^{Iw_{p}}$ as $p^{n-1}Iw_{Ig}(0,diag(p^{-1},...,p^{-1}),p^{-1})Iw_{Ig}$
does on $(\pi_{N})^{Iw_{Ig}}$ (with $\pi$ as in the statement of
Lemma). On the other hand, geometrically, we see from Lemma \ref{lem: 4}
that the double coset $U_{p}$, decomposed as
\[
Iw_{p}(diag(1,p^{-1},...,p^{-1}),p^{-1})Iw_{p} = \coprod_{E \in \Fp^{n-1}}x_{E}Iw_{p},
\]
\[
x_{E}=\left(\left(\begin{array}{cc}
1 & p^{-1}E\\
0 & p^{-1}Id_{n-1}
\end{array}\right),p^{-1}\right),
\]
acts the same way as $\coprod_{E}j_{\ast}(x_{E})Iw_{Ig}=p^{n-1}.(0,diag(p^{-1},...,p^{-1}),p^{-1})Iw_{Ig}$.
These observations will be used in \S \ref{sub:Comp-cl}. Note that
similar observations apply to all elements of $\mc{U}^{-}$.

We now discuss the action for higher levels. Since $\ZZ \times \GL_{n-1}(\Qp) \times \Qp^{\times}$ acts on the cohomology of the tower $(Ig_{U^{p},m})_{m}$ we get an action of the Hecke algebra $\mc{H}_{Ig,p}^{(m)}$ generated by the double cosets in 
$$ U_{Ig}(m) \backslash (\mb{Z} \times \GL_{n-1}(\Qp) \times \Qp^{\times}) / U_{Ig}(m) $$
where $U_{Ig}(m)=0 \times K_{n-1}(m) \times \Zp^{\times}$ (here $K_{n-1}(m)$ is the subgroup of matrices in $\GL_{n-1}(\Zp)$ that reduce to the identity modulo $p^{m}$) on cohomology of $Ig_{U^{p},m}$. Put $\mc{H}^{(m)}_{Ig}=\mc{H}^{p} \otimes_{\qq} \mc{H}_{Ig,p}^{(m)}$. Similar to the Iwahori case we embed $\mc{U}^{-}$ into $\mc{H}_{Ig,p}^{(m)}$ by  
$$ U(a_{1},...,a_{n},b) \mapsto U_{Ig}(m)(a_{1},diag(p^{a_{1}},...,p^{a_{n}}),p^{b})U_{Ig}(m).$$
We remark that all coset decomposition written out above in the Iwahori cases works the same for the higher level cases (this follows from \cite[Lemma 1.5.1]{Cas}, which implies that the number a single cosets in the double coset cannot increase). If $\pi$ is smooth admissible representation of $\GL_{n}(\Qp)\times \Qp^{\times}$ we put
$$ \pi^{U_{p}^{\prime}(m)}_{\mc{U}^{-}} := \bigcap_{u\in \mc{U}^{-}}{\rm Im}(u\, :\, \pi^{U_{p}^{\prime}(m)} \ra \pi^{U_{p}^{\prime}(m)}). $$
We remark that this is equal to the object defined before \cite[Lemma 4.1.7]{Cas} and that each $u\in \mc{U}^{-}$ acts invertibly on $\pi^{U_{p}(m)}_{\mc{U}^{-}}$ (these facts follow from \cite[Lemma 4.1.7]{Cas}). If $\sigma$ is smooth admissible representation of $\Qp^{\times} \times \GL_{n-1}(\Qp) \times \Qp^{\times}$ we similarly define 
$$ \sigma_{\mc{U}^{-}}^{U_{Ig}(m)} := \bigcap_{u\in \mc{U}^{-}}{\rm Im}(u\, :\, \sigma^{U_{Ig}(m)} \ra \sigma^{U_{Ig}(m)}); $$
the same remarks apply to this object. We may then state the following higher level version of Lemma \ref{lem: BC}. The proof is the same,  following the proof of \cite[Proposition 6.4.3] {BC} and Lemma \ref{lem: BC}; we sketch it for completeness.

\begin{lem}\label{lem: BC2}
Let $\pi$ be an irreducible admissible representation
of $\GL_{n}(\Qp) \times \Qp^{\times}$and consider $\sigma=\pi^{1\times \Zp^{\times}}$. Put $P=P_{M}\times \Qp^{\times}$. Let $P_{M}=L_{M}N_{M}$ be a Levi decomposition with $L_{M}$ a block diagonal Levi. Then the natural map
\[
\pi^{U_{p}^{\prime}(m)}_{\mc{U}^{-}}=\sigma^{K_{n}(m)}_{\mc{U}^{-}} \ra (\sigma_{N_{M}})^{\Zp^{\times}\times K_{n-1}(m)}_{\mc{U}^{-}} \otimes \delta_{P}^{-1}
\]
is an isomorphism of $\mc{U}^{-}$-modules. Here $\sigma_{N_{M}}$ denotes
the $N_{M}$-coinvariants (the unnormalized Jacquet module), $\delta_{P}$
is the modular character with respect to the parabolic $P$, and we use the notation $-_{\mc{U}^{-}}$ in the obvious way in situations not strictly covered by the remarks above.
\end{lem}

\begin{proof}
As in the proof of Lemma \ref{lem: BC} we write $B_{n}=T_{n}U_{n}$ for the Borel subgroup inside $P_{M}$. We put $T_{0}(m) = T_{n} \cap K_{n}^{\prime}(m) $. The natural map
$$ \sigma_{\mc{U}^{-}}^{K_{n}^{\prime}(m)} \ra (\sigma_{U_{n}})^{T_{0}(m)} $$
is a bijection by \cite[Proposition 4.1.4]{Cas}, and the $\mc{U}^{-}$-equivariance of 
$$ \sigma_{\mc{U}^{-}}^{K_{n}^{\prime}(m)} \ra (\sigma_{U_{n}})^{T_{0}(m)} \otimes \delta_{B_{n}}^{-1} $$
follows by \cite[Lemma 1.5.1, Proposition 4.1.1]{Cas} upon noting that 
$$ U(a_{1},...,a_{n},b)v=\delta_{B}^{-1}(g)\mc{P}_{U_{p}^{\prime}(m)}(gv) $$
where $g=(diag(p^{a_{1}},...,p^{a_{n}}),p^{b})$ and we use the notation of \cite{Cas} (this is an easy computation). The same argument gives us a $\mc{U}^{-}$-equivariant isomorphism
$$ (\sigma_{N_{M}})_{\mc{U}^{-}}^{\Zp^{\times} \times K^{\prime}_{n-1}(m)} \tilde{\ra}\, ((\sigma_{N_{M}})_{U_{n} \cap L_{M}})^{T_{0}(m)} \otimes \delta_{B_{n} \cap L_{M}}^{-1} $$
and to conclude we then argue as in the proof of Lemma \ref{lem: BC}.
\end{proof}

\subsection{Integral models for spaces of $p$-adic automorphic forms and integrality of Hecke operators\label{sub: Hida}}

In this section we we will recall some results and constructions of Hida (\cite{Hid,Hid1}). Our focus is to obtain integrality results for the $U_{p}$-operator. The results are not new and can be dug out of \cite{Hid}. We have nevertheless decided, at the suggestion of a referee, to include some details. Since the results are well known to the experts we will be somewhat brief. The author wishes to thank Vincent Pilloni for useful discussions regarding normalisations of Hecke operators in general.

We start by noting that both $Y_{Iw,1}^{0}$ and the $\ol{X}_{U(m),M}$ are the special fibres of natural formal schemes that we will denote by $\mf{X}_{Iw,1}^{0}$ and $\mf{X}_{U(m),M}$. Indeed, they are open subsets of $Y_{Iw}$ and $\ol{X}_{U(m)}$ respectively, and hence defined open formal subschemes of the completions $\mf{X}_{Iw}$ and $\mf{X}_{U(m)}$ of $\mc{X}_{Iw}$ and $\mc{X}_{U(m)}$, respectively, along $p=0$. They are solutions to moduli problems on the category $Nilp^{ft}_{\Zp}$ of $\Zp$-schemes of finite type on which $p$ is nilpotent, which we now describe briefly. The moduli problem for $\mf{X}_{Iw}$ is obtained by restricting that of $\mc{X}_{Iw}$; the same applies to $\mf{X}_{U(m)}$. To obtain the subfunctor corresponding to $\mf{X}_{Iw,1}^{0}$ one just has to impose that for $S\in Nilp_{\Zp}^{ft}$ and an $S$-point $x$ of $\mf{X}_{Iw}$, the induced $S\times_{\Zp}\Fp$-point $\ol{x}$ lies in $Y_{Iw,1}^{0}$ (recall that this is a condition directly on the moduli functor). The same applies to $\mf{X}_{U(m),M}$. 

Let us now give a description of the correspondence attached to the Hecke operator $U_{p}$ at the level of formal schemes. We will do the construction for $\mf{X}_{U(m),M}$; the construction for $\mf{X}_{Iw,1}^{0}$ is almost word for word the same. Consider the contravariant functor $Nilp_{\Zp}^{ft} \ra Sets$ given by
\[
S \mapsto (A,\lambda, i, \ol{\eta}^{p}, \alpha, \mc{D})
\]
where $(A, \lambda, i, \ol{\eta}^{p}, \alpha)\in \mf{X}_{U(m),M}(S)$ and $\mc{D}\sub \mc{G}_{A}[p]$ is a finite flat subgroup scheme arising via $\alpha$ from a direct summand of $p^{-1}M/M \sub p^{-1}\epsilon\Lambda_{u}/\epsilon\Lambda_{u}$ in the way described by \cite[Lemma II.2.4(2)]{HT}. Note that the direct summand is \emph{not} part of the data, and that the finite type condition on $S$ ensures that we may apply \cite[Lemma II.2.4(2)]{HT}. We remark that such a $\mc{D}$ is \'etale of height $p^{n-1}$. This functor is represented by a formal scheme which we will denote by $\mf{Z}$. 

We have two natural finite flat maps $p_{1}, p_{2}\, :\, \mf{Z} \ra \mf{X}_{U(m),M}$: $p_{1}$ forgets $\mc{D}$ and $p_{2}$ quotients out $A$ by the subgroup scheme of $A[p]$ corresponding to $\mc{D}$ (for more details of how to take the quotient see \cite[p.109-110]{HT}). Note that $p_{1}$ is a bijection on $\ol{\mb{F}}_{p}$-points since $\mc{G}_{s}\cong \mu_{p^{\infty}} \times (\Qp/\Zp)^{n-1}$ for any $\ol{\mb{F}}_{p}$-point $s$.

\begin{lem}\label{trace}
The trace map ${\rm Tr}\, p_{1}\, :\, \mc{O}_{\mf{Z}} \ra \mc{O}_{\mf{X}_{U(m),M}}$ is divisible by $p^{n-1}$.
\end{lem}

\begin{proof}
The proof is an application of Serre-Tate theory and is due to Hida; we content ourselves with a brief sketch. To check the statement, it suffices to verify it at the level of completed local rings at $\ol{\mb{F}}_{p}$-points. Let $s$ be such a point. Then $\wh{\mc{O}}_{\mf{X}_{U(m),M},s}$ represents the deformation functor for $\mc{G}_{s}$ and using Serre-Tate theory (cf. \cite[Theorem 2.1]{Kat}) one finds that
\[
\wh{\mc{O}}_{\mf{X}_{U(m),M},s}(A) = \Hom_{\Zp}(T_{p}\mc{G}_{s} \otimes_{\Zp}T_{p}\mc{G}^{\vee}_{s}, 1+\mf{m}_{A})
\]
for any Artinian local ring $A$ with residue field $\ol{\mb{F}}_{p}$ and maximal ideal $\mf{m}_{A}$. Similarly, $\wh{\mc{O}}_{\mf{Z},s}$ represents the deformation functor for $\mc{G}_{s}$ plus the isogeny $\mc{G}_{s} \ra \mc{G}_{s}/\mc{D}$ where $\mc{D}$ is the unique \'etale subgroup scheme of $\mc{G}_{s}[p]$ of rank $p^{n-1}$. Serre-Tate theory shows that 
\[
\wh{\mc{O}}_{\mf{Z},s}(A) = \{ (q_{1},q_{2})\in \Hom_{\Zp}(T_{p}\mc{G}_{s} \otimes_{\Zp}T_{p}\mc{G}^{\vee}_{s}, 1+\mf{m}_{A})^{2} \mid q_{1}=q_{2}^{p} \}
\]
with $A$ as above. Choosing suitable coordinates this identifies $p_{1}\, :\, \wh{\mc{O}}_{\mf{X}_{U(m),M},s} \ra \wh{\mc{O}}_{\mf{Z},s}$ with the map $W(\ol{\mb{F}}_{p})[[T_{1},...,T_{n-1}]] \ra  W(\ol{\mb{F}}_{p})[[T_{1},...,T_{n-1}]]$ defined by $T_{i} \mapsto (1+T_{i})^{p}-1$ for all $i$, and one may show by direct computation that the trace of this map is divisible by $p^{n-1}$ (here $W(\ol{\mb{F}}_{p})$ denotes the Witt vectors of $\ol{\mb{F}}_{p}$).
\end{proof}

Next we define integral models for our spaces of $p$-adic automorphic forms. Once again we restrict the discussion to $\mf{X}_{U(m),M}$ but the details for $\mf{X}_{Iw}$ are the same. Let $\mf{A}_{U(m),M}$ denote the universal abelian variety over $\mf{X}_{U(m),M}$ and let $0$ denote its zero section. Let $\mf{L}=0^{\ast}{\rm Lie}(\mf{A}_{U(m),M}/\mf{X}_{U(m),M})$ and form the frame bundle
\[
\mf{T}=\underline{\mathrm{Isom}}(\mf{L},\mc{O}_{\mf{X}_{U(m),M}}\otimes_{\Zp}\mc{O}_{B,u}).
\]
Here both $\mf{L}$ and $\mc{O}_{\mf{X}_{U(m),M}}\otimes_{\Zp}\mc{O}_{B,u}$ carry $\mc{O}_{B,u}$-actions and pairings and we require the isomorphisms to respect these. This is an $L_{n-1,1}$-torsor on $\mf{X}_{U(m),M}$, where $L_{n-1,1}$ is the standard block diagonal Levi factor of the standard $(n-1,1)$ parabolic of $\GL_{n/\Zp}$ (which we think of as the elements of similitude factor $1$ inside the obvious integral model of $G_{\Qp}$). Any finite rank representation of $L_{n-1,1}$ over $\Zp$ gives us a vector bundle on $\mf{X}_{U(m),M}$ by the usual procedure. $L_{n-1,1}$ is isomorphic to $\GL_{n-1}\times \GL_{1}$ in an obvious way and thus its irreducible representations are parametrised by $n$-tuples $(k_{1},...,k_{n})\in \ZZ^{n}$ with $k_{1}\geq ... \geq k_{n-1}$. We will denote the vector bundle corresponding to such a tuple $(k_{1},...,k_{n})$ by $\mf{W}(k_{1},...,k_{n})$; by the setup one has 
\[
\mf{L}=\mf{W}(0,...,0,1)^{\oplus n} \oplus \mf{W}(0,...,0,-1,0)^{\oplus n}.
\]
Similarly one checks that 
\[
{\rm Lie}(A_{U(m)/\Qp}/X_{U(m)/\Qp})=W(0,...,0,1,1)^{\oplus n} \oplus W(0,...,0,-1,0,1)^{\oplus n}.
\]
From this we deduce that $\mf{M}(k_{1},...,k_{n})$ gives an integral structure to $W(k_{1},...,k_{n}, k_{n}-\sum_{i=1}^{n-1}k_{i})$ over the generic fibre of $\mf{X}_{U(m),M}$. To give an integral structure to arbitrary $W(k_{1},...,k_{n},w)$ one may observe that there is a natural isomorphism $\phi\, :\, W(k_{1},...,k_{n},w)\tilde{\longrightarrow} W(k_{1},...,k_{n},k_{n}-\sum_{i=1}^{n-1}k_{i})$ and use it to transport the integral structure. This is isomorphism is not Hecke-equivariant. For $U_{p}$ (which is the only operator we are interested in), one has the relation
\[
\phi\circ U_{p} = p^{-\frac{w+k_{1}+...k_{n-1}-k_{n}}{2}}U_{p} \circ \phi. 
\]
This follows since we are twisting by the similitude character to the power of $(w+k_{1}+...k_{n-1}-k_{n})/2$ to go from $W(k_{1},...,k_{n},k_{n}-\sum_{i=1}^{n-1}k_{i})$ to $W(k_{1},...,k_{n},w)$ (note that the exponent is an integer). We may now state purpose of this section:

\begin{prop}\label{divisibility}
The slopes of $U_{p}$ on $H^{0}(X_{U(m)}^{rig},W^{\dg}(k_{1},...,k_{n},w))$ and $H^{0}(X_{Iw}^{rig},W^{\dg}(k_{1},...,k_{n},w))$ are greater than or equal to
\[
-\frac{w+k_{1}+...k_{n-1}-k_{n}}{2}+n-1.
\]
\end{prop}

\begin{proof}
Again we content ourselves with a sketch. Note that by the equivariance relation above it suffices to show that the $U_{p}$-slopes are $\geq n-1$ when $w=k_{n}-\sum_{i=1}^{n-1}k_{i}$. Set $W=W(k_{1},..,k_{n},k_{n}-\sum_{i=1}^{n-1}k_{i})$ and $\mf{W}=\mf{W}(k_{1},...,k_{n})$ and define $W^{\dg}$ similarly. We embed $H^{0}(X_{U(m),M}^{rig},W^{\dg})$ into $H^{0}(X_{U(m)}^{ord},W)$ and show that any $U_{p}$-eigenvector in the latter space has slopes $\geq n-1$. To do this it suffices to prove that $U_{p}$ is divisible by $p^{n-1}$ on $H^{0}(\mf{X}_{U(m),M},\mf{W})$. The $U_{p}$-operator is defined as the composition
\[
H^{0}(\mf{X}_{U(m),M},\mf{W}) \overset{p_{2}^{\ast}}{\ra} H^{0}(\mf{Z},p^{\ast}_{2}\mf{W}) \ra H^{0}(\mf{Z},p^{\ast}_{1}\mf{W}) \overset{\mathrm{Tr}_{p_{1}}}{\ra} H^{0}(\mf{X}_{U(m),M},\mf{W})
\]
where the middle map comes from a natural isomorphism $p_{2}^{\ast}\mf{W}\cong p_{1}^{\ast}\mf{W}$ induced from the isomorphism of Lie algebras induced by the universal isogeny on $\mf{Z}$. The first two maps are integral (by definition) and the third is divisible by $p^{n-1}$ by Lemma \ref{trace}, so the proposition follows.
\end{proof}

\begin{rem}
Hida shows that the normalisation above is optimal. This is obvious in weight $(0,...,0,0)$ by looking constant functions. The general case follows from this by Hida's theory using the big space of $p$-adic automorphic forms (containing forms of all weights at once) and the existence of Hida families. We will not need this.
\end{rem}

\section{Computation of Cohomology and Classicality\label{sec:Computation-of-Cohomology}}

In this section we will compute the Euler characteristic $\sum_{i}(-1)^{i}H_{rig}^{i}(Y_{Iw,1}^{0},V^{\dg}(\xi)^{\vee})$
in two ways in the Grothendieck group of Hecke modules and deduce a control theorem for systems of Hecke eigenvalues of overconvergent automorphic forms of small slope. We will make use of rigid cohomology;
for a short recollection of the terminology we need see \cite[\S 4]{Joh} and for a good reference see \cite{LeSt}. In this section,
all integral models will be over $\Zp$, all characteristic
$0$ schemes and rigid analytic spaces will be over $\Qp$
and all characteristic $p$ schemes will be over $\Fp$
(so e.g. we will just write $X_{Iw}$ for what was previously called
$X_{Iw/\Qp}$).

\subsection{Computation in terms of overconvergent automorphic forms\label{sub: Comp-oc}}

Let us begin by recalling the generalized BGG resolution for the pair
$(\mf{g}=Lie(G),\mf{q}=Lie(Q))$. See \cite{Hum} for
a good reference on BGG resolutions for semisimple Lie algebras; the
extension to reductive Lie algebras is trivial and just amounts to
inserting a central character (that remains constant throughout the
resolution).
\begin{thm}
\label{thm:reductive BGG}(``generalized'' BGG resolution) If $\xi$
is an irreducible representation of the reductive Lie algebra $\mf{g}$
of dominant weight $\lambda=(k_{1},...,k_{n},w)$, then we have a
resolution
\[
0\to C_{n-1}^{\xi}\to...\to C_{0}^{\xi}\to\xi\to0
\]

with $C_{s}^{\xi}=U(\mf{g}) \otimes_{U(\mf{q})} M(w_{k}(\lambda+\rho)-\rho)$,
where $w_{s}$ for $0\leq s\leq n-1$ is the element of the Weyl group
$S_{n}$ of $G$ sending $(k_{1},...,k_{n},w)$ to $(k_{1},...,k_{n-s-1},k_{n-s+1},...k_{n},k_{n-s},w)$,
$\rho=\frac{1}{2}(n-1,n-3,...,1-n,0)$ is half the sum of the positive
roots for $G$, and $M(k_{1}^{\prime},...,k_{n}^{\prime},w^{\prime})$
for $k_{1}^{\prime}\geq...\geq k_{n-1}^{\prime}$, $w^{\prime} \equiv \sum_{i=1}^{n}k_{i}^{\prime}\,{\rm mod}\,2$
denotes the irreducible algebraic representation of $L$ with dominant
weight $(k_{1}^{\prime},...,k_{n}^{\prime},w^{\prime})$. The chain
complex $C_{\bullet}^{\xi}$ is a direct summand
of the bar resolution $D_{\bullet}^{\xi}$ of $\xi$ (where $D_{s}^{\xi}=U(\mf{g}) \otimes_{U(\mf{q})} \wedge^{s}(\mf{g/q}) \otimes_{\C}\xi$), and the inclusion is a quasi-isomorphism.
Moreover if $\xi$ is an irreducible representation of $G$, then
the above sequence is a resolution of $(U(\mf{g}),Q)$-modules
(and similarly for the quasi-isomorphism).
\end{thm}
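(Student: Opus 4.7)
The plan is to deduce this from the Lepowsky parabolic BGG resolution applied to the pair $(\mathfrak{g}_{ss},\mathfrak{p}_{ss})$, where $\mathfrak{g}_{ss}$ is the semisimple part of $\mathfrak{g}$ and $\mathfrak{p}_{ss}=\mathfrak{p}\cap\mathfrak{g}_{ss}$, and then promote it to the reductive setting by keeping track of the central character. The reductive step is essentially cosmetic: any irreducible $\xi$ of $\mathfrak{g}$ of highest weight $\lambda$ decomposes canonically as the tensor product of a one-dimensional character $\chi_{\lambda}$ of the centre $Z(\mathfrak{g})$ (determined by $w$) with the restriction of $\xi$ to $\mathfrak{g}_{ss}$; since both $\mathfrak{p}$ and $\mathfrak{g}$ contain $Z(\mathfrak{g})$, the central character factor is just carried along unchanged, so it suffices to build the resolution in the semisimple setting and then tensor back with $\chi_{\lambda}$. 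This is why the weight $w$ is constant across the $w_s(\lambda+\rho)-\rho$ (note $\rho$ has zero in the last slot).

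For the semisimple statement I would invoke Lepowsky's theorem (see e.g.\ the standard treatment in Humphreys' book referenced in the paper), which says that for any parabolic $\mathfrak{p}\supseteq\mathfrak{b}$ with Levi Weyl group $W_L\subseteq W$, and for any $\lambda$ dominant integral, there is a resolution of the finite-dimensional irreducible $V(\lambda)$ by generalised Verma modules
\[
0\to\bigoplus_{w\in W^{L},\,\ell(w)=\dim(\mathfrak{g}/\mathfrak{p})}M_{\mathfrak{p}}(w\cdot\lambda)\to\cdots\to\bigoplus_{w\in W^{L},\,\ell(w)=0}M_{\mathfrak{p}}(w\cdot\lambda)\to V(\lambda)\to 0,
\]
where $w\cdot\lambda=w(\lambda+\rho)-\rho$, $M_{\mathfrak{p}}(\mu)=U(\mathfrak{g})\otimes_{U(\mathfrak{p})}M(\mu)$, and $W^L$ is the set of minimal length coset representatives for $W_L\backslash W$. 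The remaining identification is combinatorial: in our situation $\mathfrak{g}=\mathfrak{gl}_n\oplus\mathbb{C}$ and $\mathfrak{p}$ has Levi $\mathfrak{gl}_{n-1}\oplus\mathfrak{gl}_1\oplus\mathbb{C}$, so $W=S_n$ and $W_L=S_{n-1}$ (the stabiliser of the last index). The set $W^L$ has exactly $n$ elements of lengths $0,1,\dots,n-1$, and a direct check shows they are precisely the cyclic permutations $w_s$ described in the statement (the permutation sending $(k_1,\dots,k_n)$ to $(k_1,\dots,k_{n-s-1},k_{n-s+1},\dots,k_n,k_{n-s})$ has length $s$ and is the shortest representative of its $W_L$-coset).

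The assertion that $C_\bullet^\xi$ is a direct summand of the bar (Koszul) resolution $D_\bullet^\xi$ is built into the usual construction: $D_\bullet^\xi$ computes $\mathrm{Ext}^\bullet_{(\mathfrak{g},\mathfrak{p})}(\xi,-)$ (more precisely, it is the standard Chevalley--Eilenberg resolution for relative Lie algebra cohomology modulo $\mathfrak{p}$), and Lepowsky's resolution is obtained from it as the direct summand picked out by the central character $\chi_\lambda$ of $Z(\mathfrak{g})$; the complement consists of the summands whose infinitesimal character does not match that of $\xi$, so the inclusion of $C_\bullet^\xi$ into $D_\bullet^\xi$ is a quasi-isomorphism. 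Finally, the $(U(\mathfrak{g}),P)$-equivariance is automatic because every map in sight is constructed canonically from the highest weight data and the $P$-action descends from the ambient $U(\mathfrak{g})$-action.

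The only non-routine point in the whole argument is bookkeeping: matching the Lepowsky indexing $w\cdot\lambda$ to the explicit $w_s(\lambda+\rho)-\rho$ with the convention $\rho=\frac12(n-1,n-3,\dots,1-n,0)$, and confirming that $w_s$ really is the Kostant representative of length $s$. I expect no genuine obstacle, just care with signs and with checking that the last coordinate $w$ is unaffected by the $\rho$-shift (which it is, since the last entry of $\rho$ is zero and $w_s$ fixes the last slot).
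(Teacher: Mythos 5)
Your proposal is correct and takes exactly the approach the paper intends: the paper offers no proof of its own, merely citing Humphreys for the parabolic (Lepowsky) BGG resolution in the semisimple case and remarking just before the theorem that ``the extension to reductive Lie algebras is trivial and just amounts to inserting a central character,'' and your argument fills in precisely those details (the central-character reduction, Lepowsky's theorem, and the $S_{n-1}\backslash S_n$ Kostant-representative bookkeeping). One small notational slip: the direct summand of the Koszul complex is isolated by the \emph{infinitesimal} character, that is the character of $Z(U(\mathfrak{g}))$, not the ``central character of $Z(\mathfrak{g})$'' as you first phrase it --- the latter is constant across all the summands and distinguishes nothing --- but your very next clause already says ``infinitesimal character,'' so this reads as a slip of the pen rather than a gap.
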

Applying the automorphic vector bundle construction one obtains Faltings's
dual BGG complex for the vector bundle $V(\xi)$ with connection on
$X_{U}$ for any neat level $U$:
\begin{thm}
(\cite[Theorem 3]{Fal}, \cite{ChFa}, \cite{KP}) \label{thm: Faltings's BGG}If
$\xi$ is an irreducible representation of $G$ with dominant weight
$\lambda=(k_{1},...,k_{n},w)$ we have a complex
\[
0 \ra \mc{K}_{\lambda}^{0} \ra ... \ra \mc{K}_{\lambda}^{n-1} \ra 0
\]
called the dual BGG complex, with 
\[
\mc{K}_{\lambda}^{s}=W(w_{s}(\lambda+\rho)-\rho)^{\vee}=W(k_{1},...,k_{n-s-1},k_{n-s+1}-1,...,k_{n}-1,k_{n-s}+s,w)^{\vee},
\]
on $X_{U/\C}$ where the maps are Hecke-equivariant differential
operators. It is a direct summand of the de Rham
complex $V(\xi)^{\vee} \otimes_{\mc{O}_{X_{Iw/\C}}} \Omega_{X_{U/\C}}^{\bullet}$ and the inclusion is a quasi-isomorphism. Moreover, these constructions descend to any number field containing the reflex field  over which $\xi$ is defined. In particular, they may be base changed to $\Qp$ (because $G$ is split over $\Qp$ and $p$ is totally split in $F$).
\end{thm}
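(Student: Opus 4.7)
The plan is to deduce Faltings's dual BGG complex on $X_{Iw}$ from the generalized BGG resolution of Theorem \ref{thm:reductive BGG} by transporting the purely algebraic $(U(\mathfrak{g}), P)$-module picture to the geometric setting through a relative version of the Harris--Milne automorphic vector bundle construction. The key point is that the automorphic vector bundle functor sends an irreducible $L_Q$-representation $M(\nu)$ (extended trivially along the unipotent radical) to the coherent sheaf $W(\nu)$ on $X_{Iw/\mathbb{C}}$, and sends the generalized Verma module $U(\mathfrak{g}) \otimes_{U(\mathfrak{p})} M(\nu)$ to $W(\nu)$ equipped with the canonical flat connection, with the $U(\mathfrak{g})$-action realized by order-filtered differential operators.

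First I would dualize the resolution $0 \to C_{n-1}^{\xi} \to \cdots \to C_0^{\xi} \to \xi \to 0$ of Theorem \ref{thm:reductive BGG} and apply the automorphic vector bundle functor, turning each $C_s^\xi = U(\mathfrak{g}) \otimes_{U(\mathfrak{p})} M(w_s(\lambda+\rho) - \rho)$ into the coherent sheaf $\mathcal{K}_\lambda^s = W(w_s(\lambda + \rho) - \rho)^\vee$ on $X_{Iw/\mathbb{C}}$, and each boundary morphism $C_s^\xi \to C_{s-1}^\xi$ into a differential operator between these sheaves. Next I would identify the bar resolution $D_\bullet^\xi = U(\mathfrak{g}) \otimes_{U(\mathfrak{p})} \wedge^\bullet(\mathfrak{g}/\mathfrak{p}) \otimes \xi$ on the group-theoretic side with the de Rham complex $V(\xi)^\vee \otimes_{\mathcal{O}} \Omega_{X_{Iw/\mathbb{C}}}^\bullet$ on the Shimura variety, using that $\wedge^s(\mathfrak{g}/\mathfrak{p})$ corresponds under the Harris--Milne functor to $\Omega_{X_{Iw/\mathbb{C}}}^s$, reflecting the description of the holomorphic tangent bundle as the automorphic vector bundle attached to $\mathfrak{g}/\mathfrak{p}$. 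The statement in Theorem \ref{thm:reductive BGG} that $C_\bullet^\xi$ is a quasi-isomorphic direct summand of $D_\bullet^\xi$ as $(U(\mathfrak{g}), P)$-modules then translates, after applying this functor, into the statement that the dual BGG complex $\mathcal{K}_\lambda^\bullet$ is a quasi-isomorphic direct summand of the de Rham complex of $V(\xi)^\vee$.

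Hecke equivariance for prime-to-$p$ Hecke operators is automatic, since the entire construction is carried out on the level of $(U(\mathfrak{g}), P)$-modules and Hecke correspondences act by pullback/pushforward on the Shimura tower independently of the infinitesimal data; the analogous point works at $p$ thanks to the extension of the Hecke action described in \S \ref{sub:Hecke-actions}. Descent from $\mathbb{C}$ to a number field follows from the algebraicity of the Verma modules, of their morphisms in the BGG resolution, and of the automorphic vector bundle functor; base change to $\mathbb{Q}_p$ is then formal, using that $G$ is split over $\mathbb{Q}_p$ (so all the weights $w_s(\lambda + \rho) - \rho$ are defined there) and that $p$ is totally split in $F$ (so the reflex field embeds into $\mathbb{Q}_p$). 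The hard part is the careful verification that the abstract BGG differentials translate into \emph{differential operators of the correct order and weight} between coherent automorphic sheaves with Hecke-equivariant action; this is precisely the content of Faltings's argument in \cite{Fal} and its refinement in \cite{ChFa}, and I would simply invoke it rather than reprove it here.
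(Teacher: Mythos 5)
Your proposal correctly reconstructs the standard Faltings route (dualize the generalized BGG resolution of Theorem \ref{thm:reductive BGG}, push through the Harris--Milne automorphic vector bundle functor, match the bar resolution with the de Rham complex, and invoke algebraicity/functoriality for Hecke-equivariance and descent), and it appropriately defers the delicate translation of BGG differentials into differential operators to \cite{Fal} and \cite{ChFa} --- which is exactly what the paper does, since the paper offers no proof of this theorem beyond those citations. The one small imprecision is that the cotangent bundle corresponds to $(\mathfrak{g}/\mathfrak{p})^\vee$ rather than $\mathfrak{g}/\mathfrak{p}$, but this is harmless because the dualization step you perform afterward absorbs it.
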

Next we apply $j_{Y_{Iw,1}^{0}}^{\dg}$ to the complex $\mc{K}_{\lambda}^{\bullet}$
to obtain a complex $\mc{K}_{\lambda}^{\dg,\bullet}=W^{\dg}(w_{s}(\lambda+\rho)-\rho)^{\vee}$
which, by the exactness of $j_{Y_{Iw,1}^{0}}^{\dg}$, is a quasi-isomorphic
of the overconvergent de Rham complex of $V^{\dg}(\xi)^{\vee}$
and hence may be used to compute the $H_{rig}^{i}(Y_{Iw,1}^{0},V^{\dg}(\xi)^{\vee})$. The same holds for higher levels using our frame $\ol{X}_{U(m),M}\sub \ol{X}_{U(m)} \sub \mf{X}_{U(m)}$.
\begin{prop}\label{compute} We have
\[
H_{rig}^{i}(Y_{Iw,1}^{0},V^{\dg}(\xi)^{\vee})=h^{i}(H^{0}(X_{Iw}^{rig},W^{\dg}(w_{\bullet}(\lambda+\rho)-\rho)^{\vee});
\]
\[ 
H_{rig}^{i}(\ol{X}_{U(m),M},V^{\dg}(\xi)^{\vee})=h^{i}(H^{0}(X_{U(m)}^{rig},W^{\dg}(w_{\bullet}(\lambda+\rho)-\rho)^{\vee});
\]
where $h^{i}$ stands for ``$i$-th cohomology of the complex''.\end{prop}
\begin{proof}
We do the Iwahori-level case; the proof in the higher level case is entirely analogous. By the discussion above we have
\[
H_{rig}^{i}(Y_{Iw,1}^{0},V^{\dg}(\xi)^{\vee})=H_{dR}^{i}(X_{Iw}^{rig},V^{\dg}(\xi)^{\vee})=H^{i}(X_{Iw}^{rig},\mc{K}_{\lambda}^{\dg,\bullet}).
\]
Next, we claim that the coherent cohomology
groups $H^{j}(X_{Iw}^{rig},W^{\dg}(w_{s}(\lambda+\rho)-\rho)^{\vee})$
vanish for $j\geq1$. This follows from the fact that $Y_{Iw,1}^{0}$
is affine as in \cite[Theorem 20]{Joh}. The affineness of $Y_{Iw,1}^{0}$
follows from that of $\ol{X}_{U,n-1}$ by finiteness of the
morphism $Y_{Iw,1}^{0} \ra \ol{X}_{U,n-1}$, and the affineness
of $\overline{X}_{U,n-1}$ follows from the fact that $\overline{X}_{U,n-1}$
is the non-vanishing locus of a non-zero section (the Hasse invariant)
of an ample line bundle on the projective variety $\overline{X}_{U}$
(see e.g. \cite[Proposition 7.8]{LS}).
\end{proof}

From now on let $\xi$ be the algebraic representation of $G$ with
dominant weight $\lambda=(1-n-k_{n},1-k_{n-1},...,1-k_{1},-w)$ for
$k_{1}$,...,$k_{n}$ such that $k_{1}\geq k_{2}\geq...\geq k_{n}+n$.
By the above we have $H_{rig}^{i}(Y_{Iw,1}^{0},V^{\dg}(\xi)^{\vee})=h^{i}(H^{0}(X_{Iw}^{rig},W^{\dg}(w_{\bullet}(\lambda+\rho)-\rho)^{\vee})$ and similarly in the higher level case.
Here $\xi^{\vee}$ has dominant weight $(k_{1}-1,...,k_{n-1}-1,k_{n}+n-1,w)$
and the dual BGG complex has, for $0\leq s\leq n-2$,
\[
W^{\dg}(w_{s}(\lambda+\rho)-\rho)^{\vee}=W^{\dg}(k_{1},...,k_{s},k_{s+2}-1,...,k_{n-1}-1,k_{n}+n-1,k_{s+1}-1-s,w)
\]
and $W^{\dg}(w_{n-1}(\lambda+\rho)-\rho)^{\vee}=W^{\dg}(k_{1},...,k_{n},w)$. 

Before we proceed let us make a few basic remark on slope decompositions. $U_{p}$ acts compactly on our spaces of overconvergent automorphic forms. On any vector space $V$ with a compact $U_{p}$-action there is, for any $h\in \R$, a canonical and functorial slope decomposition $V=V^{<h}\oplus V^{\geq h}$ where $V^{<h}$ is finite-dimensional and $U_{p}$ has slopes $<h$ on $V^{<h}$ and slopes $\geq h$ on $V^{\geq h}$. If $A$ is any commutative algebra of operators commuting with $U_{p}$, then the slope decomposition passes from the abelian category of finite-dimensional $A[U_{p}]$-modules to its Grothendieck group. We deduce the following consequence of Propositions \ref{divisibility} and \ref{compute}:

\begin{cor}
\label{cor: oc comp}With notation as above, set
\[
\alpha_{n-2}:=-\frac{w+k_{1}+...+k_{n-2}+k_{n}-k_{n-1}}{2}.
\]
Then 
\[
H^{i}_{rig}(Y_{Iw,1}^{0}, V^{\dg}(\xi)^{\vee})^{<\alpha_{n-2}}=\begin{cases} H^{0}(X_{Iw}^{rig},W^{\dg}(k_{1},...,k_{n},w))^{< \alpha_{n-2}} & \mbox{if } i=n-1 \\ 0  & \mbox{if }  i\neq n-1. \end{cases}
\]
Thus one has an equality 
\[
H^{0}(X_{Iw}^{rig},W^{\dg}(k_{1},...,k_{n},w))^{< \alpha_{n-2}}= \left( \sum_{i}(-1)^{n-1+i}H_{rig}^{i}(Y_{Iw,1}^{0},V^{\dg}(\xi)^{\vee}) \right) ^{< \alpha_{n-2}}
\]
as virtual $A[U_{p}]$-modules for any commutative algebra $A$ of Hecke operators commuting with $U_{p}$. The analogous statement holds in the higher level case. \end{cor}
\begin{proof}
We restrict ourselves to the Iwahori-level case for simplicity of notation; the proof is the same in both cases. As noted above, we have $H_{rig}^{i}(Y_{Iw,1}^{0},V^{\dg}(\xi)^{\vee}) = h^{i}(H^{0}(X_{Iw}^{rig},W^{\dg}(w_{\bullet}(\lambda+\rho)-\rho)^{\vee})$
and
\[
W^{\dg}(w_{s}(\lambda+\rho)-\rho)^{\vee} = W^{\dg}(k_{1},...,k_{s},k_{s+2}-1,...,k_{n-1}-1,k_{n}+n-1,k_{s+1}-1-s,w),
\]
for $0\leq s\leq n-2$ and $W^{\dg}(w_{n-1}(\lambda+\rho)-\rho)^{\vee}=W^{\dg}(k_{1},...,k_{n},w)$,
moreover the differentials in the dual BGG complex are Hecke-equivariant.
By Proposition \ref{divisibility}, the slopes of $U_{p}$ are greater
than or equal to
\[
\alpha_{s}:=-\frac{w+\left(\sum_{i\neq s+1}k_{i}\right)-k_{s+1}+2s+2}{2}+(n-1)
\]
on $H^{0}(X_{Iw}^{rig},W^{\dg}(w_{s}(\lambda+\rho)-\rho)^{\vee})$
for $0\leq s\leq n-2$ and greater than or equal to $\alpha_{n-1}:=-(w+k_{1}+...+k_{n-1}-k_{n})/2+n-1$
on $H^{0}(X_{Iw}^{rig},W^{\dg}(w_{n-1}(\lambda+\rho)-\rho)^{\vee})$.
We have that, for $0 \leq s \leq n-2$,
\[
\alpha_{s}-\alpha_{n-1}=k_{s+1}-(k_{n}+1+\frac{s}{2})
\]
and hence $\alpha_{1} \geq ... \geq \alpha_{n-2} \geq \alpha_{n-1}$. Since the formation of cohomology commutes with taking slope decompositions the first statement follows, and the second is a direct consequence of the first.\end{proof}
\begin{rem}
The ``usual" normalization of Hecke operators in the geometric theory of automorphic forms amounts, in our language, to choosing $w=k_{n}+2(n-1)-\sum_{i=1}^{n-1}k_{i}$. In this case $\alpha_{n-2}=k_{n-1}-k_{n}+1-n$.
\end{rem}

\subsection{Computation in terms of classical automorphic forms \label{sub:Comp-cl}}

We continue to assume that $\xi$ is the irreducible algebraic representation
of $G$ with dominant weight $\lambda=(1-n-k_{n},1-k_{n-1},...,1-k_{1},-w)$.
\begin{prop}
\label{prop:-Poincare D}$\sum_{i}(-1)^{d+i}H_{rig}^{i}(Y_{Iw,1}^{0},V^{\dg}(\xi)^{\vee})=\sum_{i}(-1)^{d+i}H_{rig,c}^{i}(Y_{Iw,1}^{0},V^{\dg}(\xi))^{\vee}$
as virtual $\mc{H}^{p}[\mc{U}^{-}]$-modules. \end{prop}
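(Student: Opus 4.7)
The plan is to derive this identity from Poincar\'e duality for rigid cohomology with overconvergent $F$-isocrystal coefficients, applied to the smooth affine variety $Y_{Iw,1}^{0}$ of pure dimension $n-1$.

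First, since $V^{\dagger}(\xi)$ is an overconvergent $F$-isocrystal on $Y_{Iw,1}^{0}$, Poincar\'e duality (see \cite{LeSt}) furnishes perfect pairings
\[
H_{rig}^{i}(Y_{Iw,1}^{0},V^{\dagger}(\xi)^{\vee})\otimes H_{rig,c}^{2(n-1)-i}(Y_{Iw,1}^{0},V^{\dagger}(\xi))\longrightarrow\mathbb{Q}_{p},
\]
hence natural isomorphisms $H_{rig}^{i}(Y_{Iw,1}^{0},V^{\dagger}(\xi)^{\vee})\cong H_{rig,c}^{2(n-1)-i}(Y_{Iw,1}^{0},V^{\dagger}(\xi))^{\vee}$ of $\mathbb{Q}_{p}$-vector spaces. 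Substituting $j=2(n-1)-i$ in the signed Euler characteristic, and using that $2(n-1)$ is even so that $(-1)^{i}=(-1)^{j}$, one obtains
\[
\sum_{i}(-1)^{d+i}[H_{rig}^{i}(Y_{Iw,1}^{0},V^{\dagger}(\xi)^{\vee})]=\sum_{j}(-1)^{d+j}[H_{rig,c}^{j}(Y_{Iw,1}^{0},V^{\dagger}(\xi))^{\vee}]
\]
as virtual $\mathbb{Q}_{p}$-vector spaces.

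It remains to upgrade this to an equality of virtual $\mathcal{H}^{p}[\mathcal{U}^{-}]$-modules, i.e.\ to check Hecke compatibility. The Poincar\'e pairing is natural for finite flat correspondences via the adjointness $\langle p_{2,*}p_{1}^{*}v,w\rangle=\langle v,p_{1,*}p_{2}^{*}w\rangle$, so the isomorphism intertwines a Hecke operator $T=p_{2,*}p_{1}^{*}$ on $H_{rig}^{i}(\cdot,V^{\dagger}(\xi)^{\vee})$ with the correspondence transpose $T^{t}=p_{1,*}p_{2}^{*}$ acting on $H_{rig,c}^{2(n-1)-i}(\cdot,V^{\dagger}(\xi))$. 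Since the identity is in the Grothendieck group of Hecke modules, it suffices to match characters; by the Lefschetz trace formula in rigid cohomology a correspondence and its transpose share the same fixed-point locus with the same multiplicities, so they induce the same Lefschetz number on $\sum_{i}(-1)^{i}H^{\bullet}$, and the two virtual $\mathcal{H}^{p}[\mathcal{U}^{-}]$-module classes agree.

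The main obstacle will be this last step for the Atkin--Lehner operators in $\mathcal{U}^{-}$: the correspondences defining them are finite flat but partly inseparable, involving the Frobenius factor $j^{*}Fr^{m}$ from \S\ref{sub:Hecke-actions}, and $\mathcal{U}^{-}$ is not closed under correspondence-transpose, so one cannot merely appeal to commutativity as for operators in $\mathcal{H}^{p}$ at good primes. Here one uses that Frobenius is self-adjoint up to a Tate twist under Poincar\'e duality, combined with the explicit coset decompositions in \S\ref{sub:Hecke-algebras}, to match the actions of $U_{p}$ and its generalizations on both sides; the Tate twists are harmless at the level of classes in the Grothendieck group of $\mathcal{H}^{p}[\mathcal{U}^{-}]$-modules.
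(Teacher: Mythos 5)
Your overall strategy (Poincar\'e duality for the overconvergent $F$-isocrystal $V^{\dagger}(\xi)$ on the smooth affine scheme $Y_{Iw,1}^{0}$, plus the observation that $(-1)^{i}=(-1)^{2(n-1)-i}$) is exactly the paper's, and the first half of your argument is fine.

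Where you diverge from the paper, and where you introduce a genuine gap, is in the Hecke-equivariance step. You correctly note that the Poincar\'e pairing intertwines $T=p_{2,*}p_{1}^{*}$ on one side with the transpose $T^{t}=p_{1,*}p_{2}^{*}$ on the other. But the conclusion you should draw from this adjunction is that the induced isomorphism $H_{rig}^{i}(Y_{Iw,1}^{0},V^{\dagger}(\xi)^{\vee})\cong H_{rig,c}^{2(n-1)-i}(Y_{Iw,1}^{0},V^{\dagger}(\xi))^{\vee}$ is already a strict isomorphism of $\mathcal{H}_{Iw}$-modules, once the dual is given its natural contragredient structure: the $[UgU]$-action on $M^{\vee}$ is the dual of the $[Ug^{-1}U]$-action on $M$, and for a Hecke operator given by a finite correspondence the adjoint under the pairing is precisely $[Ug^{-1}U]$. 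That is all the paper means by ``Poincar\'e duality is Hecke equivariant since Hecke operators act by correspondences''; one then simply restricts the $\mathcal{H}_{Iw_{p}}$-module structure to $\mathcal{U}^{-}$. Your worry that $\mathcal{U}^{-}$ is not closed under the anti-involution $g\mapsto g^{-1}$ is accurate but not an obstruction: the dual module carries the full contragredient $\mathcal{H}_{Iw_{p}}$-structure, and the $\mathcal{U}^{-}$-action on it is the restriction of that; equivariance holds on the nose, not merely on traces.

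Your substitute argument via the Lefschetz trace formula is therefore unnecessary, and more seriously it does not actually close the gap you identified. ``A correspondence and its transpose share the same fixed-point locus with the same multiplicities'' is a statement about constant coefficients and tame correspondences; here the coefficient sheaves on the two sides are $V^{\dagger}(\xi)^{\vee}$ and $V^{\dagger}(\xi)$, the cohomologies are of different type (ordinary versus compactly supported), and the $\mathcal{U}^{-}$-correspondences at $p$ contain purely inseparable (Frobenius-type) factors whose transposes involve Verschiebung; equality of local terms is not automatic. Finally, Tate twists are not harmless in the Grothendieck group of $\mathcal{H}^{p}[\mathcal{U}^{-}]$-modules, since they shift the $\mathcal{U}^{-}$-eigenvalues by powers of $p$. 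The fix is simply to state the contragredient convention and invoke the adjunction once; no trace-formula input is needed for this proposition.
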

\begin{proof}
By Poincar\'e duality $H_{rig}^{i}(Y_{Iw,1}^{0},V^{\dg}(\xi)^{\vee})=H_{rig,c}^{2(n-1)-i}(Y_{Iw,1}^{0},V^{\dg}(\xi))^{\vee}$,
and Poincar\'e duality is Hecke equivariant since Hecke operators
act by correspondences. The proposition follows by taking the alternating sum.
\end{proof}
By definition
\[
\sum_{i}(-1)^{d+i}H_{rig,c}^{i}(Y_{Iw,1}^{0},V^{\dg}(\xi))^{\vee} = \left(\sum_{i}(-1)^{i}H_{rig,c}^{d+i}(Y_{Iw,1}^{0},V^{\dg}(\xi))\right)^{\vee}
\]
so we may calculate $\sum_{i}(-1)^{d+i}H_{rig}^{i}(Y_{Iw,1}^{0},V^{\dg}(\xi)^{\vee})$
via $\sum_{i}(-1)^{d+i}H_{rig,c}^{i}(Y_{Iw,1}^{0},V^{\dg}(\xi))$.
The calculation of the latter is essentially done by \cite[Theorem V.5.4]{HT}. To state and use it we will
need some notation, and we will also need to explain how to pass from the $\ell$-adic
setting of that theorem to the $p$-adic rigid setting that we are
in. Let us first begin by stating the special case we need:

\begin{prop}\label{V.5.4} For any $\xi$ we have an equality
\[
n \sum_{i}(-1)^{d+i} \varinjlim_{U^{p},m}H_{et,c}^{i}(Ig_{U^{p},m},V_{\ell}(\xi)) =  \sum_{i}(-1)^{d+i}\varinjlim_{U^{p},m}\iota_{\C,\ell}(H_{dR}^{i}(X_{U(m)/\C},V(\xi)))_{N_{M}^{op}}^{\Zp^{\times}} 
\]
of virtual $G_{M}(\mb{A}^{\infty})$-representations over $\ol{\qq}_{\ell}$ ($\ell\neq p$ a prime), where $V_{\ell}(\xi)$ is lisse $\ol{\qq}_{\ell}$-sheaf attached to $\xi$ and $\iota_{\ell}\, :\, \C \tilde{\ra} \ol{\qq}_{\C,\ell}$ is an isomorphism.
\end{prop}

\begin{proof}
This is a matter of specializing \cite[Theorem V.5.4]{HT} to our case; it is also a special case of  \cite[Theorem 6.7]{Shi4} (which has more transparent notation, the reader might prefer this). We freely use the notation of \cite[Theorem V.5.4]{HT}. In our case their $\rho$ is the trivial representation and $h=n-1$. One checks by definition that their function ${\rm Red}_{\rho}^{h}$ in this case reduces the unnormalized Jacquet functor $-_{N_{M}^{op}}$ (which is exact, so works well on virtual representations). One also checks by definition that the action of the group $G^{(h)}(\mb{A}^{\infty}$ used their is equivalent to our $G_{M}(\mb{A}^{\infty})$-action (here the $P_{M}$-action is trivial on $N_{M}$ on the right hand side).
\end{proof}

We now compare the left hand side in this proposition to $\sum_{i}(-1)^{d+i} \varinjlim_{m}H_{rig,c}^{i}(Ig_{U^{p},m},V^{\dg}(\xi))$ (still with a general $\xi$).

\begin{prop}\label{Deligne}
Fix $m$. Let $g\in G(\mb{A}^{p,\infty})\times (\ZZ \times \GL_{n-1}(\Qp))^{+} \times \Qp^{\times}$ and set $U_{Ig,m}=U^{p}\times \ZZ \times K_{n-1}(m) \Zp^{\times}$, where $K_{n-1}(m)\sub \GL_{n-1}(\Zp)$ is the subgroup of matrices reducing to the identity modulo $p^{m}$. Consider the double coset $U_{Ig,m}gU_{Ig,m}$ which acts as a cohomologcial correspondence on $H_{rig,c}^{i}(Ig_{U^{p},m},V^{\dg}(\xi))$ and $H_{et,c}^{i}(Ig_{U^{p},m},V_{\ell}(\xi))$. Then we have (writing ${\rm tr}$ for the trace)
\[
{\rm tr}(U_{Ig,m}gU_{Ig,m} \mid H_{rig,c}^{i}(Ig_{U^{p},m},V^{\dg}(\xi))) = {\rm tr}(U_{Ig,m}gU_{Ig,m} \mid H_{et,c}^{i}(Ig_{U^{p},m},V_{\ell}(\xi))) \in \qq .
\]
\end{prop}

\begin{proof}
Recall that $H_{et,c}^{i}(Ig_{U^{p},m},V_{\ell}(\xi))=a_{\xi}H^{i}_{et,c}(A^{m_{\xi}}_{Ig,m},\ol{\qq}_{\ell})(t_{\xi})$ and that this commutes with the actions, in the sense that the action of $U_{Ig,m}gU_{Ig,m}$ on the left hand side matches with the action of $c_{g,\xi}:=a_{\xi}U_{Ig,m}gU_{Ig,m}a_{\xi}$ on the right hand side. Here $A_{Ig,m}$ denotes the universal abelian variety over $Ig_{U^{p},m}$. We remark that $c_{g,\xi}$ is a linear combination of correspondences with rational coefficients (by construction); let us write $c_{g,\xi}=\sum_{j} \alpha_{j}c_{j}$ with $\alpha_{j}\in \qq$. Let $\varphi$ denote the Frobenius on $A_{Ig,m}$. Then, by to Fujiwara's trace formula (\cite{Fuj}) one has 
\[
{\rm tr}(\varphi^{N}c_{g,\xi}\mid H^{i}_{et,c}(A^{m_{\xi}}_{Ig,m}, \ol{\qq}_{\ell}))=\sum_{j} \alpha_{j}{\rm tr}(\varphi^{N}c_{j}\mid H^{i}_{et,c}(A^{m_{\xi}}_{Ig,m},\ol{\qq}_{\ell})) = \sum_{j}\alpha_{j}\# {\rm Fix}(\varphi^{N}c_{j}) \mid A_{Ig,m})
\]
for any $N \gg 0$, where ${\rm Fix}$ denotes the fixed point set. We note that the same argument applies to rigid cohomology, where instead of Fujiwara's trace formula we use its rigid analogue, due to Mieda (see \cite{Mie}; the details are to appear in work in preparation according to personal communication with Mieda). Thus we may conclude that
\[
{\rm tr}(\varphi^{N}c_{g,\xi}\mid H^{i}_{et,c}(A^{m_{\xi}}_{Ig,m}, \ol{\qq}_{\ell}))={\rm tr}(\varphi^{N}c_{g,\xi}\mid H^{i}_{rig,c}(A^{m_{\xi}}_{Ig,m}))\in \qq
\]
for large enough $N$ and it is standard to deduce that the same formula holds for all $N$, in particular $N=0$ (see e.g. the proof of \cite[Corollary 12.3.3]{Lau}). Twisting by $t_{\xi}$ we then obtain the result.
\end{proof}

\begin{cor}
For any $\xi$ we have an equality
\[
n \sum_{i}(-1)^{d+i} \varinjlim_{U^{p},m}H_{rig,c}^{i}(Ig_{U^{p},m},V^{\dg}(\xi)) =  \sum_{i}(-1)^{d+i}\varinjlim_{U^{p},m} \left( H_{dR}^{i}(X_{U(m)},V(\xi))^{\Zp^{\times}} \right) _{N_{M}^{op}} 
\]
of virtual $G_{M}(\mb{A}^{\infty})$-representations over $\Qp$.
\end{cor}

\begin{proof}
Since this may be checked by taking invariants under compact open subgroups and comparing traces of double coset operators, it follows directly from Propositions \ref{V.5.4} and \ref{Deligne}.
\end{proof}

\begin{cor}\label{cor2}
We have equalities (using our fixed $U^{p}$)
\[
n \sum_{i}(-1)^{d+i} H_{rig}^{i}(Y_{Iw,1}^{0},V^{\dg}(\xi)^{\vee}) =  \sum_{i}(-1)^{d+i}\left( \left( \varinjlim_{m}\left( H_{dR}^{i}(X_{U(m)},V(\xi))^{\Zp^{\times}}\right)_{N_{M}^{op}}\right)^{\vee} \right)^{\Zp^{\times}\times I_{n-1}}
\]
and
\[
n \sum_{i}(-1)^{d+i} H_{rig}^{i}(\ol{X}_{U(m),M},V^{\dg}(\xi)^{\vee}) =  \sum_{i}(-1)^{d+i}\left( \left( \varinjlim_{m}\left( H_{dR}^{i}(X_{U(m)},V(\xi))^{\Zp^{\times}}\right)_{N_{M}^{op}}\right)^{\vee} \right)^{\Zp^{\times}\times K_{n-1}(m)}
\]
(for all $m\geq 1$) of virtual $\mc{H}_{Ig}$-modules.
\end{cor}

\begin{proof}
For the first equality we take duals (using Proposition \ref{prop:-Poincare D} on the left hand side) and then take $\Zp^{\times} \times I_{n-1}$-invariants and identify $Ig_{Iw}$ with $Y_{Iw,1}^{0}$ Hecke-equivariantly. For the second we take duals as before and use Lemma \ref{tower} to replace the cohomology of the tower $(Ig_{U^{p},m})$ by that of $(\ol{X}_{U(m),M})$. We then take $\Zp^{\times}\times K_{n-1}(m)$-invariants.
\end{proof}

Our task is now to understand the right hand side better at $p$. To do this, we need to recall
the following general result:

\begin{lem}(\cite[Corollary 4.2.5]{Cas})
\label{lem: Jacquet n dual}Let $G$ be a connected reductive group
over $\Qp$, $P$ a parabolic with a Levi decomposition $P=LN$ and $\pi$ an irreducible
admissible representation of $G(\Qp)$. Let $P^{op}=LN^{op}$
be the opposite parabolic of $P$. Then $(\pi_{N^{op}})^{\vee} \cong (\pi^{\vee})_{N}$
as representations of $L$.
\end{lem}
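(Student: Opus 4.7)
The statement is Corollary 4.2.5 of Casselman's unpublished notes, so my plan is to sketch the argument using Casselman's canonical pairing between a Jacquet module and a Jacquet module of the contragredient taken along the opposite parabolic. The whole strategy hinges on exhibiting, directly from the data of $\pi$, a canonical nondegenerate $L(\mathbb{Q}_p)$-equivariant bilinear form between $\pi_{N^{op}}$ and $(\pi^{\vee})_N$.

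First I would recall that, since $\pi$ is smooth admissible, Jacquet's theorem implies that $\pi_{N^{op}}$ and $(\pi^{\vee})_N$ are themselves smooth admissible representations of $L(\mathbb{Q}_p)$. Therefore, to identify one with the smooth contragredient of the other it suffices to produce a nondegenerate $L(\mathbb{Q}_p)$-invariant pairing
\[
\langle -,- \rangle_{\mathrm{can}} : \pi_{N^{op}} \times (\pi^{\vee})_N \to \mathbb{C}.
\]

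To construct the pairing, given $v \in \pi$ and $\tilde v \in \pi^{\vee}$, I would analyze the function $a \mapsto \langle a \cdot v, \tilde v \rangle$ as $a$ runs over the split center $A_L$ of $L$, using an Iwahori-type decomposition adapted to $P$ and $P^{op}$. By smoothness of $v$ and $\tilde v$ together with admissibility, for $a$ sufficiently contracting on $N^{op}$ this matrix coefficient stabilizes, and one checks that the stable value depends only on the classes $\bar v \in \pi_{N^{op}}$ and $\bar{\tilde v} \in (\pi^{\vee})_N$. Bilinearity and equivariance under $L(\mathbb{Q}_p)$ are then formal from the construction (the intertwining with the action of a general $l \in L$ reduces to the case of $a$ by a standard Bruhat-type argument).

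The remaining step, which is the main obstacle, is nondegeneracy: if $\bar v \in \pi_{N^{op}}$ pairs trivially against every element of $(\pi^{\vee})_N$, then $\bar v = 0$, and similarly on the other side. This is the technical core of Casselman's argument and relies essentially on admissibility, through the asymptotic control of matrix coefficients along $A_L$ and the existence of enough $\tilde v \in \pi^{\vee}$ detecting any nonzero vector modulo $N^{op}$-translates. Once nondegeneracy is in hand, admissibility of both $\pi_{N^{op}}$ and $(\pi^{\vee})_N$ promotes the pairing to the desired isomorphism $(\pi_{N^{op}})^{\vee} \cong (\pi^{\vee})_N$ of smooth $L$-representations, concluding the proof.
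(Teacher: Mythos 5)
This is exactly the argument the paper invokes via the citation to Casselman's Corollary 4.2.5 (the canonical pairing of Casselman \S 4.2 between $\pi_{N^{op}}$ and $(\pi^{\vee})_N$), so your sketch is the same proof the paper points to, and it is essentially correct. One small caution: for $a$ deep in the relevant cone of the split centre of $L$ the matrix coefficient $\langle \pi(a)v,\tilde v\rangle$ does not literally become constant; rather it agrees there with the corresponding matrix coefficient of the admissible (hence finite over the split centre) Jacquet modules -- a finite linear combination of generalized characters of $a$ -- and the canonical pairing $\langle \bar v,\bar{\tilde v}\rangle_{\mathrm{can}}$ is obtained by extrapolating that finite-character expansion to $a=1$, not by taking a stable limiting value.
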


We can now put the various technical results together to obtain:

\begin{prop}\label{prop: cl comp}
We have equalities
\[
n \sum_{i}(-1)^{d+i} H_{rig}^{i}(Y_{Iw,1}^{0},V^{\dg}(\xi)^{\vee}) \otimes \delta_{P_{M}}^{-1}=  \sum_{i}(-1)^{d+i} H_{dR}^{i}(X_{Iw},V(\xi)^{\vee})
\]
and
\[
n \sum_{i}(-1)^{d+i} H_{rig}^{i}(\ol{X}_{U(m),M},V^{\dg}(\xi)^{\vee})_{\mc{U}^{-}} \otimes \delta_{P_{M}}^{-1} =  \sum_{i}(-1)^{d+i} H_{dR}^{i}(X_{U^{\prime}(m)},V(\xi)^{\vee})_{\mc{U}^{-}}
\]
(for all $m\geq 1$) of virtual $\mc{H}^{p}[\mc{U}^{-}]$-modules.
\end{prop}
\begin{proof}
We start with the Iwahori case. The left hand side above is the left hand side of the formula in Corollary \ref{cor2}; we need to compute the right hand side. For simplicity, write $\pi(\xi)$ for $\sum_{i}(-1)^{d+i}\varinjlim_{m}H_{dR}^{i}(X_{U(m)},V(\xi))$ define $\pi(\xi^{\vee})$ similarly; the statement of the proposition boils down to showing that 
\[
(((\pi(\xi)^{\Zp^{\times}})_{N_{M}^{op}})^{\vee})^{\Zp^{\times}\times I_{n-1}}=\pi(\xi^{\vee})^{Iw_{p}}.
\]
By the preceding Lemma the left hand side is equal to $(((\pi(\xi)^{\Zp^{\times}})^{\vee})_{N_{M}})^{\Zp^{\times}\times I_{n-1}}$. Commuting the dual and the $\Zp^{\times}$-invariants we get $((\pi(\xi^{\vee})^{\Zp^{\times}})_{N_{M}})^{\Zp^{\times}\times I_{n-1}}$, and applying Lemma \ref{lem: BC} this is equal to $(\pi(\xi^{\vee})^{\Zp^{\times}})^{I_{n}}=\pi(\xi^{\vee})^{Iw_{p}}$, as desired. The higher level case is proved in exactly the same way, applying Lemma \ref{lem: BC2} instead of Lemma \ref{lem: BC}.
\end{proof}

From this theorem one may now deduce control theorems. The form of the result is the simplest and strongest when the highest weight of $\xi$ is regular. Numerically, this amounts to
$$ k_{1}>...>k_{n-1}>k_{n}+n. $$
We note that this assumption is rather harmless from the point of view of eigenvarieties. 

In what follows we put $\mf{g}_{\infty}=Lie(G(\R))$
and let $K_{\infty}$ be a maximal compact modulo center subgroup
of $G(\R)$. For any $(\mf{g}_{\infty},K_{\infty})$-module
$\tau$, we let $H^{i}(\mf{g}_{\infty},K_{\infty},\tau)$ denote
the $i$-th $(\mf{g}_{\infty},K_{\infty})$-cohomology of $\tau$
(see \cite{BW}). We may now state and prove the main result of this paper:

\begin{thm}
\label{thm: main}
Assume that $k_{1}>...>k_{n-1}>k_{n}+n$. Then 
$$ H^{0}(X_{Iw}^{rig}, W(k_{1},...,k_{n},w))^{< \alpha_{n-2}} = H^{0}(X_{Iw}^{rig}, W^{\dg}(k_{1},...,k_{n},w))^{< \alpha_{n-2}} $$
and
$$ H^{0}(X_{U(m)}^{rig}, W(k_{1},...,k_{n},w))_{\mc{U}^{-}}^{< \alpha_{n-2}} = H^{0}(X_{Iw}^{rig}, W^{\dg}(k_{1},...,k_{n},w))_{\mc{U}^{-}}^{< \alpha_{n-2}}, $$
where we recall that 
$$ \alpha_{n-2} = -\frac{w+k_{1}+...+k_{n-2}+k_{n}-k_{n-1}}{2}. $$
In other words, any generalised overconvergent $U_{p}$-eigenform of weight $(k_{1},...,k_{n},w)$ and slope less than $\alpha_{n-2}$ (on which the whole $\mc{U}^{-}$ acts invertibly in the higher level case) is classical.
\end{thm}

\begin{proof}
Let us do the higher level case, the proof in the Iwahori case being almost identical (and slightly simpler).  By Corollary \ref{cor: oc comp} we have an equality
$$ n.H^{0}(X_{U(m)}^{rig}, W^{\dg}(k_{1},...,k_{n},w))_{\mc{U}^{-}}^{< \alpha_{n-2}} = \left( n \sum_{i} (-1)^{n-1+i} H_{rig}^{i}(\ol{X}_{U(m),M}, V^{\dg}(\xi)^{\vee})_{\mc{U}^{-}}\otimes \delta_{P}^{-1} \right)^{<\alpha_{n-2}} $$
of virtual $\mc{H}^{p}[\mc{U}^{-}]$-modules. Applying Proposition \ref{prop: cl comp} to the right hand side we obtain the equality
$$ n.H^{0}(X_{U(m)}^{rig}, W^{\dg}(k_{1},...,k_{n},w))_{\mc{U}^{-}}^{< \alpha_{n-2}} = \left( \sum_{i} (-1)^{n-1+i} H_{dR}^{i}(X_{U^{\prime}(m),M}, V(\xi)^{\vee})_{\mc{U}^{-}} \right)^{<\alpha_{n-2}} $$
of virtual $\mc{H}^{p}[\mc{U}^{-}]$-modules. By regularity of the highest weight $H_{dR}^{i}(X_{U^{\prime}(m),M}, V(\xi)^{\vee})=0$ unless $i=d$ (by work of Vogan-Zuckerman), so the formula simplifies to 
$$ n.H^{0}(X_{U(m)}^{rig}, W^{\dg}(k_{1},...,k_{n},w))_{\mc{U}^{-}}^{< \alpha_{n-2}} = H_{dR}^{d}(X_{U^{\prime}(m),M}, V(\xi)^{\vee})_{\mc{U}^{-}}^{<\alpha_{n-2}}. $$
By Matsushima's formula 
$$ H_{dR}^{d}(X_{U^{\prime}(m)}(\mb{C}), V(\xi)^{\vee})=\bigoplus_{\pi}m(\pi)(\pi^{\infty})^{U^{\prime}(m)} \otimes H^{d}(\mf{g}_{\infty},K_{\infty}, \pi_{\infty} \otimes \xi^{\vee}). $$
where $\pi=\pi^{\infty}\otimes \pi_{\infty}$ (finite resp. infinite part) runs over the irreducible admissible representations of $G(\mb{A})$ and $m(\pi)$ denotes the multiplicity of $L^{2}$-space of automorphic forms. Results of Vogan-Zuckerman and Kottwitz (see the discussion after Theorem 1 of \cite{Kot}) imply that the $H^{d}(\mf{g}_{\infty},K_{\infty}, \pi_{\infty} \otimes \xi^{\vee})=0$ unless $\pi_{\infty}$ belongs to the discrete series $L$-packet associated with $\xi$, and that, for fixed $\pi^{\infty}$, the multiplicity $m(\pi^{\infty}\otimes \pi_{\infty})$ is constant as $\pi_{\infty}$ varies in that $L$-packet. Writing $m(\pi^{\infty})$ for this common multiplicity we may rewrite the above as
$$ H_{dR}^{d}(X_{U^{\prime}(m)}(\mb{C}), V(\xi)^{\vee})=\bigoplus_{\pi^{\infty}} m(\pi^{\infty})(\pi^{\infty})^{U^{\prime}(m)} \otimes \bigoplus_{\pi_{\infty}} H^{d}(\mf{g}_{\infty},K_{\infty}, \pi_{\infty} \otimes \xi^{\vee}). $$
Once again by results of Vogan-Zuckerman, the space $\bigoplus_{\pi_{\infty}} H^{d}(\mf{g}_{\infty},K_{\infty}, \pi_{\infty} \otimes \xi^{\vee})$ has dimension $n$ and it's $(p,q)$-decomposition is regular (i.e. all non-zero $(p,q)$-spaces have dimension $1$); see e.g. \cite[Corollary VI.6.27(3)]{HT} for similar results. Since the holomorphic part corresponds to the contribution from holomorphic automorphic forms we may conclude that 
$$ H_{dR}^{d}(X_{U^{\prime}(m)}(\mb{C}), V(\xi)^{\vee})=n.H^{0}(X_{U^{\prime}(m)}(\mb{C}),W(k_{1},...,k_{n},w)) $$
as $\mc{H}^{p}(\mc{U}^{-})$-modules. Together with the previous discussion and the GAGA-principle we may conclude that
$$ H^{0}(X_{U^{\prime}(m)}^{rig},W(k_{1},...,k_{n},w))_{\mc{U}^{-}}^{< \alpha_{n-2}} = H^{0}(X_{U^{\prime}(m)}^{rig}, W^{\dg}(k_{1},...,k_{n},w))_{\mc{U}^{-}}^{< \alpha_{n-2}} $$
as virtual $\mc{H}^{p}(\mc{U}^{-})$-modules, and hence as honest $\mc{H}^{p}(\mc{U}^{-})$-modules (e.g. by counting dimensions, since the left hand side is a submodule of the right hand side). This finishes the proof.
\end{proof}

\begin{rem} We end with a few remarks:

\begin{enumerate}

\item The condition of $\mc{U}^{-}$-invertibility in the higher level case may be dropped (by adjusting Lemma \ref{lem: BC2}) but, as far as the author is aware of, this condition is always satisfied in the context of eigenvarieties.

\item When $\xi$ has non-regular highest weight one may still prove that any finite slope overconvergent eigenform has system of Hecke eigenvalues occurring in the space of automorphic forms.

\item Knowing the control theorem for generalized eigenforms, as opposed to just eigenforms, is important in some applications (e.g. in Chenevier's strategy to prove that the weight map on an eigenvariety is \'etale at non-critical points).  

\end{enumerate}

\end{rem}

\end{document}